\documentclass[reqno, 11pt]{amsart}

\makeatletter
\let\origsection=\section \def\section{\@ifstar{\origsection*}{\mysection}} 
\def\mysection{\@startsection{section}{1}\z@{.7\linespacing\@plus\linespacing}{.5\linespacing}{\normalfont\scshape\centering\S}}
\makeatother  

\usepackage{mathrsfs}
\usepackage{amsmath}
\usepackage{amssymb}
\usepackage{amsthm}
\usepackage{mathtools}
\usepackage{letterswitharrows}
\usepackage{tikz-cd}
\usetikzlibrary{intersections,spath3}
\usepackage{thmtools}
\usepackage{thm-restate}
\usepackage[only,llbracket,rrbracket]{stmaryrd}
\usepackage[linesnumbered,ruled,vlined]{algorithm2e}
\usepackage{enumitem}
\setenumerate{label={\normalfont (\roman*)}}

\usepackage[utf8]{inputenc}
\usepackage[T1]{fontenc}
\usepackage{lmodern}
\usepackage[babel]{microtype}
\usepackage[english]{babel}
\usepackage{relsize}

\usepackage{graphicx}
\usepackage{subcaption}
\usepackage{import}

\usepackage{svg}
\usepackage[extractname=filename]{svg-extract}

\usepackage{geometry}
\usepackage{xcolor} 
\colorlet{darkishRed}{red!80!black}
\colorlet{darkishBlue}{blue!60!black}
\colorlet{darkishGreen}{green!60!black}
\definecolor{lucasBlue}{HTML}{A4D4F5}
\definecolor{lucasDarkGreen}{HTML}{8CBF65}
\definecolor{lucasLightRed}{HTML}{F5B3B9}
\definecolor{lucasOrange}{HTML}{EBC07A}

\usepackage{hyperref}
\hypersetup{
	colorlinks,
	linkcolor={blue!60!black},
	citecolor={green!60!black},
	urlcolor={blue!60!black},
}
\usepackage[abbrev, msc-links]{amsrefs}
\usepackage[nameinlink, capitalise, noabbrev]{cleveref}
\crefformat{enumi}{#2#1#3}
\crefformat{equation}{#2(#1)#3}
\crefname{mainresult}{Theorem}{Theorems}
\usepackage{doi}

\renewcommand{\PrintDOI}[1]{\doi{#1}}

\let\setminus=\smallsetminus

\renewcommand{\leq}{\leqslant}
\renewcommand{\geq}{\geqslant}

\newtheorem{theorem}{Theorem}[section] 

\newtheorem{corollary}[theorem]{Corollary}
\newtheorem{lemma}[theorem]{Lemma}

\newcounter{claimcounter}[theorem]
\newtheorem{claim}[claimcounter]{Claim}
\newtheorem*{claim*}{Claim}
\crefname{claim}{Claim}{Claims}
\newcounter{casecounter}[theorem]
\newtheorem*{case*}{Case}
\crefname{case}{Case}{Cases}

\newcounter{subclaimcounter}[claimcounter]
\newtheorem*{subclaim*}{Subclaim}

\theoremstyle{definition}

\crefname{mainexample}{Example}{Examples}

\crefname{example}{Example}{Examples}

\crefname{routine}{Routine}{Routines}

\crefname{subroutine}{Subroutine}{Subroutines}

\crefname{subsubroutine}{Subsubroutine}{Subsubroutines}

\crefname{step}{Step}{Steps}
\theoremstyle{remark}

\newcommand{\COMMENT}[1]{{}}

\let\epsilon=\varepsilon
\let\theta=\vartheta
\let\rho=\varrho
\let\phi=\varphi
\def\N{\mathbb N}

\makeatletter

\def\calCommandfactory#1{%
  \expandafter\def\csname c#1\endcsname{\mathcal{#1}}}
\def\frakCommandfactory#1{%
  \expandafter\def\csname frak#1\endcsname{\mathfrak{#1}}}
   
\newcounter{ctr}
\loop
  \stepcounter{ctr}
  \edef\X{\@Alph\c@ctr}
  \expandafter\calCommandfactory\X
  \expandafter\frakCommandfactory\X
\ifnum\thectr<26
\repeat

\usepackage{etoolbox}

\newbool{arXiv}
\booltrue{arXiv} 

\newcommand{\arXivOrNot}[2]{\ifbool{arXiv}{{#1}}{{#2}}}

\newbool{pdfBool}
\booltrue{pdfBool} 

\newcommand{\pdfOrNot}[2]{\ifbool{pdfBool}{{#1}}{{#2}}}

\makeatletter
\newcommand\thankssymb[1]{\textsuperscript{\@fnsymbol{#1}}}
\makeatother

\newcounter{mylabelcounter}

\makeatletter
\newcommand{\labelText}[2]{%
#1\refstepcounter{mylabelcounter}%
\immediate\write\@auxout{%
  \string\newlabel{#2}{{1}{\thepage}{{\unexpanded{#1}}}{mylabelcounter.\number\value{mylabelcounter}}{}}%
}%
}

\makeatletter
\newcommand\footnoteref[1]{\protected@xdef\@thefnmark{\ref{#1}}\@footnotemark}
\makeatother

\definecolor{cMaroon}{HTML}{93152a}
\newcommand{\defn}[1]{{\color{cMaroon}{\emph{#1}}}}

\usepackage{tabularx}

\title{On order-compatible paths in infinite graphs}

\author[M.~Pitz]{Max Pitz}

\author[L.~Real]{Lucas Real\thankssymb{1}}
\thanks{\thankssymb{1} Supported by São Paulo Research Foundation (FAPESP) through grant number 2025/00669-5.}

\author[R.~Schaut]{Roman Schaut\thankssymb{2}}
\thanks{\thankssymb{2} Supported by a State Graduate Funding Program scholarship of the University of Hamburg.}

\address{University of Hamburg, Department of Mathematics, Bundesstrasse 55 (Geomatikum), 20146 Hamburg, Germany}
\email{\{max.pitz, roman.schaut, lucas.real\}@uni-hamburg.de}

\begin{document}

\begin{abstract}
    Two $a{-}b$ paths in a graph $G$ are \emph{order-compatible} if their common vertices occur in the same order when travelling from $a$ to $b$.
    Suppose a graph contains an infinite number $\delta$ of edge-disjoint $a{-}b$ paths. 
    G.A.~Dirac asked whether there always exists a family of $\delta$ edge-disjoint $a{-}b$ paths that are pairwise order-compatible. Confirming a conjecture by B.~Zelinka, we show that this holds provided that the given $\delta$ edge-disjoint $a{-}b$ paths have bounded length. 
    Combining this with an earlier work of Zelinka, it follows that Dirac's question for an infinite cardinal $\delta$ has an affirmative answer if and only if $\delta$ has uncountable cofinality.

    As our second main result, we show that even when Dirac's question fails, it  still holds that `being connected by $\delta$ edge-disjoint, pairwise order-compatible paths' is an equivalence relation for all values of $\delta$. The most interesting case here is when $\delta$ is countable. 
\end{abstract}

\keywords{Order-compatible paths; Menger's theorem.}

\subjclass[2020]{05C63, 05C40}

\vspace*{-12pt}
\maketitle

\section{Introduction}

This paper is concerned with the following question of  G.\,A.~Dirac:

\begin{quote}
\emph{
The two vertices $a$ and $b$ of a graph are connected by an infinite number $\delta$ of paths each of which have $a$ and $b$ as their end-vertices and no pair of which have an edge in common. Are $a$ and $b$ necessarily connected by $\delta$ paths such that each of them has $a$ and $b$ as its end-vertices, no two have an edge in common, and the common vertices of any two of the paths always occur in the same order along both going from $a$ to $b$? The graphs may contain multiple edges.
}
\hfill \cite[Problem 6, pp.~158--159]{fiedler1964theory}
\end{quote}

Let us introduce some notation related to Dirac's question. Two $a{-}b$ paths in a graph\footnote{As in the setting of Dirac's question, graphs in this paper may have parallel edges but no loops.} $G$ are \defn{order-compatible} if both traverse their common vertices in the same order. For a (finite or infinite) cardinal $\delta$ and a given graph $G$, we write
\defn{$a \sim_\delta b$} if $G$ contains $\delta$ edge-disjoint $a{-}b$ paths, and \defn{$a \approx_\delta b$} if $G$ contains $\delta$ edge-disjoint $a{-}b$ paths that are pairwise order-compatible. Essentially, Dirac wanted to know whether $a \sim_\delta b$ always implies $a \approx_\delta b$, 
which we will refer to as \emph{Dirac's question for the cardinal $\delta$}. Given the recent development regarding edge-connectivity in infinite graphs \cites{kurkofka2022every, knappe2024immersion}, we believe there is a renewed interest for a complete solution to Dirac's question, which is the purpose of this paper.

That Dirac's question is true for finite $\delta$ is an easy exercise: simply take any collection of edge-disjoint $a{-}b$ paths $P_1,\ldots,P_k$ such that the cardinality $|E(P_1) \cup \cdots \cup E(P_k)|$ is minimal.
However, B.~Zelinka established, perhaps unexpectedly, that the answer to Dirac's question is in the negative for $\delta = \aleph_0$ \cite{zelinka1967poznamka}\,\footnote{A different example of an infinitely edge-connected, countable, planar graph in which no two vertices are connected by infinitely many edge-disjoint, pairwise order-compatible paths has been  given by J.~Kurkofka \cite{WhirlGraph}.}, and extended his counterexample in \cite{zelinka1968nespovcetne} to all cardinals $\delta$ of countable cofinality.
On the other hand, Zelinka showed in \cite{zelinka1968nespovcetne} that Dirac's question has an affirmative answer when $\delta$ is regular uncountable, or when $\delta = \aleph_0$ and $a$ is connected to $b$ by infinitely many paths of bounded length $\leq p$ for some $p \in \N$. Based on the last result, he conjectured \cite[Hypothesis, p.~254]{zelinka1968nespovcetne} that this last obstacle is the key to solving Dirac's question. 

Our first main result  confirms Zelinka's conjecture, and resolves the remaining open cases of Dirac's question.

\begin{restatable}{theorem}{ZelinkasConjecture}
\label{thm_zelinaconj}
     Let $\delta$ be an infinite cardinal and $a,b \in V(G)$ be two vertices for which there are $\delta$ edge-disjoint $a{-}b$ paths of length at most $p$, for some $p \in \mathbb{N}$. 
    Then there exist $\delta$ edge-disjoint  $a{-}b$ paths that are pairwise order-compatible.
\end{restatable}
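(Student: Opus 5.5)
We argue by induction on $p$, using a $\Delta$-system / sunflower-type extraction. Fix $\delta$ edge-disjoint $a{-}b$ paths of length at most $p$. Passing to a subfamily of size $\delta$, we may assume they all have the same length $\ell\le p$. Write each as $P_\alpha = a x^\alpha_1 x^\alpha_2\cdots x^\alpha_{\ell-1} b$.

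\textbf{Step 1: make the coordinates either constant or injective.} We thin out the family one coordinate at a time, $i=1,\dots,\ell-1$. At each stage, either some vertex $v$ occurs as $x^\alpha_i$ for $\delta$ many $\alpha$, and we keep exactly those $\alpha$ (making coordinate $i$ constant), or every vertex occurs in coordinate $i$ for fewer than $\delta$ indices.

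In the second case we want a subfamily of size $\delta$ on which coordinate $i$ is injective. For regular $\delta$ this follows from a greedy transfinite selection. For singular $\delta$ we instead use a cofinal sequence of regular cardinals $\kappa_\xi\to\delta$ and choose injective subfamilies compatibly. Alternatively, we strengthen the dichotomy to ``some vertex occurs $\delta$ times, or we can pass to $\delta$ indices with distinct values.'' When every vertex has multiplicity $<\delta$, there must be $\delta$ distinct values, and picking one index per value gives the required subfamily.

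We also demand more: distinct coordinates with different ``types'' should not collide across paths. That is, for $i\ne j$ and $\alpha\ne\beta$, the vertex $x^\alpha_i$ should equal $x^\beta_j$ only when both are the same constant vertex. This is the real $\Delta$-system statement for the finite sets $V(P_\alpha)$ with positional labels. We obtain it by a further thinning: a vertex occurring in $\delta$ many paths must be one of the constants, and otherwise each path meets only $<\delta$ others' non-constant parts. A free-set / greedy argument, in the style of Hajnal's set-mapping theorem, then yields $\delta$ pairwise ``disjoint outside the kernel'' paths. Since each path has finitely many vertices and each non-kernel vertex lies in $<\delta$ paths, Hajnal's free set theorem for set mappings of size $<\delta$ gives a free subfamily of size $\delta$.

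\textbf{Step 2: pairwise order-compatibility after thinning.} After Step 1 the paths form a $\Delta$-system. Any two of them share exactly the kernel vertices $a$, $b$, and the constant coordinates $c_{i_1},\dots,c_{i_m}$, and these occur in both paths at the same positions $i_1<\dots<i_m$. However, the same constant vertex may appear at two different positions on one path only if the path is not a path, so this cannot happen. Hence the common vertices of any two paths appear in the same order $a,c_{i_1},\dots,c_{i_m},b$ on both. The paths remain edge-disjoint, so they are pairwise order-compatible. In this form no induction on $p$ is even needed: bounded length is exactly what makes the sets finite of bounded size, so the $\Delta$-system/free-set machinery applies. It is also what fails in Zelinka's countable-cofinality counterexamples, where the paths must have unbounded length.

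\textbf{Main obstacle.} The delicate step is the extraction in Step 1 for singular $\delta$. There the usual $\Delta$-system lemma fails, so the greedy argument must be replaced by Hajnal's free set theorem, which holds for all infinite $\delta$ with set-mapping bound $<\delta$. The key is to verify its hypothesis carefully. We define $F(\alpha)$ to be the set of $\beta$ whose path shares a non-constant vertex with $P_\alpha$, and we must ensure $|F(\alpha)|<\delta$. This requires first removing all vertices of multiplicity $\delta$ into the kernel. A vertex of multiplicity $\delta$ may occur at varying positions, so we first pigeonhole on its position among the $\le p$ possibilities, keeping $\delta$ indices, which is a finite refinement. For singular $\delta$, a vertex's multiplicity could be $<\delta$ yet the union over the finitely many vertices of $P_\alpha$ could still be $<\delta$. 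This is fine, since a finite union of sets of size $<\delta$ has size $<\delta$. So the plan goes through, and I expect only bookkeeping there.
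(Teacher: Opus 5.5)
\textbf{The singular case has a genuine gap.} For regular $\delta$ (including $\delta=\omega$) your argument is sound. There the coordinatewise dichotomy holds: if every vertex occurs fewer than $\delta$ times in coordinate $i$, regularity forces $\delta$ distinct values. After Step~1 each non-constant vertex lies on at most $p$ of the paths, so a simple greedy selection gives the positional $\Delta$-system and Step~2 goes through. This is a different and more elementary route than the paper's separator-and-skeleton argument, and it even yields a subfamily of the given paths.

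For singular $\delta$ the gap is not bookkeeping: the object you try to extract need not exist. Let $\delta=\aleph_\omega$ and take the paths $a\,z^n_\xi\,s_n\,y^n_\xi\,b$ for $n<\omega$ and $\xi<\aleph_n$, with all $z^n_\xi,y^n_\xi$ distinct. These are $\aleph_\omega$ edge-disjoint $a{-}b$ paths of length $4$. A subfamily of size $\aleph_\omega$ must pass through infinitely many $s_n$, since each $s_n$ lies on only $\aleph_n$ paths. Being uncountable, it must also contain two paths through the same $s_n$. So some pairs meet in $\{a,s_n,b\}$ and others in $\{a,b\}$, and no $\Delta$-subsystem of size $\delta$ exists, even though the whole family is already pairwise order-compatible. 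In particular, your claim that multiplicities $<\delta$ force $\delta$ distinct values fails: coordinate $2$ takes only $\aleph_0$ values, with multiplicities $\aleph_n$. Hajnal's free set theorem cannot repair this. It needs a \emph{uniform} bound $|F(\alpha)|<\mu$ for one fixed $\mu<\delta$, while you only have $|F(\alpha)|<\delta$. In the example the conflict relation is a disjoint union of cliques of sizes $\aleph_n$, so every free set is countable.

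\textbf{What is missing} is a second level of the argument that handles $\operatorname{cf}(\delta)$.
\begin{enumerate}
\item The paper splits off $\operatorname{cf}(\delta)$ subfamilies of regular sizes $\delta_\alpha>\operatorname{cf}(\delta)$. From each it extracts a skeleton $a=t_0\dots t_k=b$ with $\kappa_V(t_i,t_{i+1})\ge\delta_\alpha$.
\item It makes cofinally many skeleta pairwise order-compatible by applying the regular case, for $\operatorname{cf}(\delta)$, in the auxiliary multigraph $H$.
\item It then realises the $\beta$-th skeleton by $\delta_\beta$ new paths that avoid the fewer than $\delta_\beta$ vertices and edges used before.
\end{enumerate}
Within your framework an analogous repair would be the following.
\begin{enumerate}
\item In each subfamily take a positional $\Delta$-system $\mathcal{D}_\alpha$ of size $\delta_\alpha$ with kernel $K_\alpha$.
\item Apply the $\Delta$-system lemma again to $(K_\alpha)_{\alpha<\operatorname{cf}(\delta)}$. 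This gives cofinally many $\alpha$ whose kernels pairwise meet in one set $C$, and after a finite pigeonhole $C$ is traversed in a fixed order.
\item A vertex outside $K_\alpha$ lies on at most one member of $\mathcal{D}_\alpha$. Using this, discard fewer than $\delta_\alpha$ members of each $\mathcal{D}_\alpha$ so that chosen paths from different levels meet exactly in $C$.
\end{enumerate}
Without some such step, your proof covers only regular $\delta$.
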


\begin{corollary}
    \label{cor_main}
    Dirac's question for an infinite cardinal $\delta$ is true precisely when $\delta$ has uncountable cofinality.
\end{corollary}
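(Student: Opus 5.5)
The corollary splits into two directions, and I would treat them separately. The theorem above supplies the positive direction for uncountable cofinality, and Zelinka's counterexamples supply the negative one.

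\emph{Countable cofinality.} Suppose $\operatorname{cf}(\delta)=\aleph_0$. I would not construct anything new here. Zelinka's construction in \cite{zelinka1967poznamka}, extended in \cite{zelinka1968nespovcetne} to every cardinal of countable cofinality, produces a graph $G$ with vertices $a,b$ such that $a\sim_\delta b$ but $a\not\approx_\delta b$. This is exactly a negative answer to Dirac's question for $\delta$. The only check needed is that the cited construction is stated for all $\delta$ with $\operatorname{cf}(\delta)=\aleph_0$, including $\delta=\aleph_0$ itself, and not merely for singular ones. \cite{zelinka1967poznamka} covers $\aleph_0$ and \cite{zelinka1968nespovcetne} covers the rest.

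\emph{Uncountable cofinality.} Suppose $\operatorname{cf}(\delta)>\aleph_0$, and let $\mathcal P$ be a family of $\delta$ edge-disjoint $a{-}b$ paths. Every path is finite, so $\mathcal P=\bigcup_{p\in\N}\mathcal P_p$, where $\mathcal P_p$ consists of the paths in $\mathcal P$ of length exactly $p$. I claim some $\mathcal P_p$ has cardinality $\delta$. Suppose not, so $|\mathcal P_p|<\delta$ for every $p$. Since $\operatorname{cf}(\delta)>\aleph_0$, the countable set $\{|\mathcal P_p| : p\in\N\}$ is bounded below $\delta$. Hence $\kappa:=\sup_p|\mathcal P_p|<\delta$, and
\[
|\mathcal P|\le\aleph_0\cdot\kappa<\delta,
\]
where the strict inequality uses that $\delta$ is uncountable (as $\operatorname{cf}(\delta)>\aleph_0$) and $\kappa<\delta$. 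This contradicts $|\mathcal P|=\delta$.

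So fix $p$ with $|\mathcal P_p|=\delta$. Then $\mathcal P_p$ is a family of $\delta$ edge-disjoint $a{-}b$ paths of length at most $p$, since edge-disjointness passes to subfamilies. \cref{thm_zelinaconj} now yields $\delta$ edge-disjoint, pairwise order-compatible $a{-}b$ paths, that is, $a\approx_\delta b$. This case subsumes Zelinka's regular uncountable result.

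The only delicate step is the cardinal arithmetic in the pigeonhole claim. It genuinely needs uncountable cofinality, and the countable-cofinality counterexamples show that this hypothesis cannot be dropped.
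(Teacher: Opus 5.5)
Your proposal is correct and follows the paper's proof: Zelinka's counterexamples settle the countable-cofinality case, and for uncountable cofinality you pigeonhole the paths by length to find $\delta$ many of some fixed length $p$, then apply \cref{thm_zelinaconj}. The only difference is that you spell out the cardinal arithmetic behind the pigeonhole step, which the paper states in one line.
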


\begin{proof}[Proof of \cref{cor_main}]
    We already mentioned that the answer to Dirac's question is in the negative if $\delta$ has countable cofinality. 
    Conversely, let $\delta$ be an infinite cardinal of uncountable cofinality, and $a,b \in V(G)$ be two vertices admitting a $\delta$ sized family $\cP$ of edge-disjoint $a{-}b$ paths. 
    For $n \in \N$, let $\cP_n$ be the subfamily of $\cP$ consisting of all paths of length $n$. 
    Since $\sum_{n \in \N} |\cP_n| =\delta$ and $\delta$ has uncountable cofinality, there is some $p \in \N$ with $|\cP_p| = \delta$. Applying \cref{thm_zelinaconj} to this path family $\cP_p$ finishes the proof.
\end{proof}

But our paper is not only about uncountable combinatorics. Indeed, our second main result is  especially interesting in the countable case.   
To appreciate it, recall that Menger's theorem for infinite graphs readily implies that  $\sim_\delta$ is an equivalence relation on $V(G)$. Thus, a consequence of Dirac's question -- if positive -- would have been that $\approx_\delta$ is an equivalence relation, too. 
Now, in contrast to the situation regarding Dirac's question, we prove that this weaker consequence holds for all cardinals $\delta$ (but the equivalence classes under $\approx_\delta$ may be finer than the classes under $\sim_\delta$, when $\delta$ has countable cofinality).

\begin{restatable}{theorem}{Transitive}
\label{thm_main2}
For any graph $G$ and for any finite or infinite cardinal $\delta$, the relation $\approx_\delta$ is an equivalence relation on $V(G)$.
\end{restatable}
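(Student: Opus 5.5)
Reflexivity and symmetry are immediate: we use the trivial path at $a$ (or treat $a\approx_\delta a$ by convention), and we reverse all paths, which preserves order-compatibility. So everything rests on transitivity. Suppose $a\approx_\delta b$ via a family $\cP$ and $b\approx_\delta c$ via a family $\cQ$, each consisting of $\delta$ edge-disjoint, pairwise order-compatible paths. We want $\delta$ edge-disjoint, pairwise order-compatible $a{-}c$ paths. For finite $\delta$ this follows from finite Menger: we get $a\sim_\delta c$, and then the minimal-edge-set argument from the introduction gives $a\approx_\delta c$. So assume $\delta$ is infinite.

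The natural construction is to concatenate. For each $P\in\cP$, follow $P$ from $a$ until it first meets $\bigcup\cQ$ at a vertex $x$. Then follow some $Q\in\cQ$ through $x$ towards $c$. The key observation is that order-compatibility is a "linear" condition along each family. We define a partial order on $V(\bigcup\cP)$ by precedence along the paths of $\cP$; it is consistent because the paths are pairwise order-compatible, and it is acyclic since every path goes from $a$ to $b$. Similarly, $\cQ$ induces an order on $V(\bigcup\cQ)$. We then want to choose the $a{-}c$ paths so that each one consists of an initial segment of a $\cP$-path followed by a final segment of a $\cQ$-path, so that common vertices are compared using the $\cP$-order on the first parts and the $\cQ$-order on the second parts. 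The difficulty is the crossing case: a vertex that lies on the $\cP$-part of one new path and on the $\cQ$-part of another. To handle it, I would first reduce to the case that the two families interact in a controlled way. Since $\delta$ is infinite, each $P\in\cP$ is finite and meets only finitely many vertices. A cardinality/pigeonhole thinning (keeping $\delta$ paths) arranges that each $P$ meets at most finitely many $Q$'s and vice versa, and that any edges shared between the families can be discarded by removing finitely many paths per path.

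The cleanest strategy I would try is to pass through $b$ with a "late switch". Take each $P$ up to the first vertex $x_P$ on it that lies on some $Q$, and switch there to the $\cQ$-path $Q$ with $x_P\in Q$. Each $Q$ gets used at most once (up to thinning), using a greedy transfinite matching of length $\delta$ that avoids the finitely many conflicts of each path. Concatenations $aPx_PQc$ with $x_P$ chosen as the first $\cQ$-vertex have $\cP$-parts internally disjoint from $\bigcup\cQ$. So two new paths can only share vertices either both in $\cP$-parts, where the $\cP$-order is consistent, or both in $\cQ$-parts, where the $\cQ$-order is consistent. Any shared vertex must lie on two $\cP$-parts or on two $\cQ$-parts, because a $\cP$-part vertex other than its endpoint avoids $\bigcup\cQ$. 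The remaining check is that a vertex in a $\cP$-part comes before a vertex in a $\cQ$-part on both paths, which holds automatically since $\cP$-parts precede $\cQ$-parts. Edge-disjointness follows because $\cP$-parts avoid $E(\bigcup\cQ)$ except possibly at the final edge, which we adjust.

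The main obstacle is the matching step: ensuring $\delta$ distinct $P$'s can be assigned distinct $Q$'s through their first hitting vertices. This may fail, since many $P$'s might first hit $\bigcup\cQ$ at $b$ itself or on a single $Q$. At $b$ this is fine: each $P$ ends at $b$, and we pair with a distinct $Q$ through $b$. The problematic case is when $\delta$ many $P$'s first hit few $Q$'s. There I would exploit the linear $\cQ$-order on a single $Q$. The $P$'s hitting $Q$ at vertices $x$ can be rerouted along $Q$ up to $b$ and then continue along fresh $\cQ$-paths. This rerouting uses edges of $Q$ repeatedly, so I would instead only keep one $P$ per hit-vertex and rely on a counting argument (regular vs. singular $\delta$, handled via cofinality) to show that enough $P$'s first hit $\bigcup\cQ$ at $b$ or at pairwise distinct $Q$'s. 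I expect this counting, especially for countable $\delta$, to be the heart of the proof.
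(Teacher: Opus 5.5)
\textbf{There is a genuine gap: the matching step fails.} Your check that first-hit concatenations $aPx_PQc$ are edge-disjoint and pairwise order-compatible is essentially right, provided distinct $P$'s are sent to distinct $Q$'s. There is one case you leave open, for $P\mapsto Q$ and $P'\mapsto Q'$: $x_{P'}\in x_PQc$ together with $x_P\in x_{P'}Q'c$. Order-compatibility of $Q$ and $Q'$ excludes it, as in \cref{lem:Concatenation}. The real problem is the matching step, which you defer to a counting argument. No such argument exists, because the claim is false.

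Take vertices $a,b,c,z_0,z_1,\dots$. Let $Q_k$ consist of two new private paths $b\to z_k$ and $z_k\to c$. Let $P_n$ run $a\to z_0\to z_1\to\dots\to z_n\to b$ along new private paths. The $Q_k$ pairwise meet only in $\{b,c\}$, and $P_n\cap P_m=\{a,z_0,\dots,z_{\min(n,m)},b\}$ in the same order on both. So both families are valid witnesses of $a\approx_\omega b$ and $b\approx_\omega c$.

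Now fix any infinite $\mathcal{Q}'\subseteq\mathcal{Q}$ with least index $k_0$. Every $P_n$ with $n\ge k_0$ first meets $\bigcup\mathcal{Q}'$ at $z_{k_0}$, which lies on $Q_{k_0}$ alone. Only the $k_0$ paths $P_n$ with $n<k_0$ first meet it at $b$. Thinning $\mathcal{P}$ does not change first hits. Hence every choice of subfamilies yields only finitely many matched pairs. Yet $a\approx_\omega c$ holds here: take the paths $aP_kz_kQ_kc$, $k\in\mathbb{N}$.

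The same example refutes your preliminary thinning. Each $Q_k$ meets every $P_n$ with $n\ge k$ at $z_k$, and all paths meet at $b$, so you cannot arrange that each $Q$ meets only finitely many $P$'s. Also, pairwise order-compatibility does not make precedence along $\mathcal{P}$ a global partial order, since three paths can induce a cycle; you do not really use this, though.

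\textbf{What is missing} is a way to place the switch other than at the first hit from $a$. The paper works from the $c$ side. For $Q\in\mathcal{Q}$, let $v_Q$ be the first vertex of $\bigcup\mathcal{P}$ met when traversing $Q$ from $c$.
\begin{itemize}
\item If infinitely many distinct $v_Q$ occur, concatenating at them works (\cref{lem:NoTerminals}); this handles the example above.
\item Otherwise there is a $(\mathcal{P}',\mathcal{Q}')$-terminal: a vertex on every path of an infinite $\mathcal{Q}'$ whose $c$-side tails meet $\bigcup\mathcal{P}'$ only in it. If it lies on infinitely many $P$'s, route everything through it (\cref{lem:InfiniteHitting}).
\item If it lies on only finitely many $P$'s, delete those and iterate. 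This produces terminals $w_0,w_1,\dots$, nested families $\mathcal{Q}_n$, and segment families $\Sigma_n$ between consecutive terminals. These interact only at the terminals (\cref{obs:UniqueInterval}).
\end{itemize}

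The final paths are $aP_ku_kI_kv_kQ_kc$. Each follows $P_k$ to its first vertex $u_k$ on $\bigcup\Sigma$, which lies in some $\Sigma_{n_k}$ with $n_k\ge k$. It then runs along a segment $I_k$ of a $\mathcal{Q}$-path back to the terminal $v_k=w_{n_k-1}$. From there it continues along a fresh path $Q_k$, which may differ from the one carrying $I_k$. Spreading out the $n_k$ keeps everything compatible. None of this is a cardinality count.

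The uncountable cases are not handled by counting either. The paper gets uncountable cofinality from \cref{cor_main} plus Menger's theorem. It reduces singular $\delta$ of countable cofinality to the countable case in three steps. First it builds an auxiliary multigraph from the skeleton sequences of \cref{lem:regularCardCase}. Then it applies the countable case there. Finally it lifts back to $G$ with \cref{lem:auxilaryGraph}.
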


This paper is organised as follows.
We begin in \cref{sec_reg} with a technical strengthening of Zelinka's result from \cite{zelinka1968nespovcetne} when $\delta$ is regular infinite. This technical strengthening in the regular case will be the key ingredient that allows us to lift the proof to the singular case, thus completing the proof of Zelinka's conjecture, i.e., of \cref{thm_zelinaconj}.

We then proceed to prove \cref{thm_main2}. The crucial and most interesting case in \cref{thm_main2} is the countable case $\delta = \aleph_0$, which we will prove in \cref{sec_ordercomp_cble}. 
In \cref{sec_ordercomp_uncble}, we complete the proof of \cref{thm_main2} by lifting our construction to all singular cardinals.

\section{Zelinka's conjecture}
\label{sec_reg}

All graphs in this paper may have parallel edges.
Given two distinct vertices $a,b$ in a graph $G$, let \defn{$\kappa_V(a,b)$} denote the maximal size of an $a{-}b$ path system of pairwise internally vertex disjoint paths, and let \defn{$\kappa_E(a,b)$} denote the maximal size of an $a{-}b$ path system of pairwise edge-disjoint paths.
The \defn{length} of a path is its number of edges. For a given path $P$ in $G$ and two vertices $u,v\in P$, the unique subpath in $P$ from $u$ to $v$ is denoted by \defn{$uPv$}.
Given $n \in \mathbb{N}_0$ we also write $[n]:=\{0, 1, \dots, n\}$.

\begin{lemma} \label{lem:regularCardCase}
    Let $\delta$ be a regular infinite cardinal\,\footnote{So $\delta$ is either $\omega$, or regular uncountable.} and $a,b \in V(G)$ two vertices admitting a family $\mathcal{P}$ of size $\delta$ of edge-disjoint paths of length $\leq p$, for some $p \in \mathbb{N}$.
    
    Then there exists a sequence $Q \colon a=t_0 t_1 \dots t_k=b$ of distinct vertices in $\mathcal{P}$ with $\kappa_V(t_i, t_{i+1})\geq \delta$ for every $i \in [k-1]$.
    
    Moreover, there exists a subfamily $\cP'\subseteq \mathcal{P}$ such that every path $P \in \cP'$ traverses every vertex on $Q$ and exactly in the order of $Q$,\footnote{But they may intersect outside of $Q$ and so are not necessarily order-compatible.}
    implying that $k\leq p$.
\end{lemma}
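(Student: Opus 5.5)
Since $\delta$ is regular and every path in $\mathcal{P}$ has at most $p+1$ vertices, I will build $Q$ greedily, one vertex at a time, passing to subfamilies of full size $\delta$ at each step. The invariant is this: after step $i$ we have vertices $a=t_0,\dots,t_i$ and a subfamily $\mathcal{P}_i\subseteq\mathcal{P}$ with $|\mathcal{P}_i|=\delta$ such that every $P\in\mathcal{P}_i$ passes through $t_0,\dots,t_i$ in this order. For each $j<i$ we also require $\kappa_V(t_j,t_{j+1})\geq\delta$. Since the vertices of $Q$ lie on each path in $\mathcal{P}_i$ in order, we automatically get $k\leq p$, and the process stops once $t_i=b$.

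For the inductive step, fix $t_i\neq b$ and look at the segments $t_iPb$ for $P\in\mathcal{P}_i$. These are $\delta$ edge-disjoint $t_i$--$b$ paths of length at most $p$. Call a vertex $v$ \emph{heavy} if it lies on $\delta$ of these segments after $t_i$; for instance, $b$ is heavy. I would pick $t_{i+1}$ as follows.
\begin{enumerate}
\item Discard the $<\delta$ paths that are not relevant to the classification below, keeping $\delta$ of them.
\item For each remaining path $P$, let $v_P$ be the first heavy vertex on $t_iPb$ after $t_i$.
\end{enumerate}
To make this choice uniform, the cleanest approach is a counting and pigeonhole argument over the $\leq p$ positions. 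Regularity of $\delta$ gives a single vertex $t_{i+1}$ and $\delta$ paths $P$ with $v_P=t_{i+1}$. These paths form $\mathcal{P}_{i+1}$, and every one of them traverses $t_0,\dots,t_{i+1}$ in order.

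The main obstacle is proving $\kappa_V(t_i,t_{i+1})\geq\delta$. The segments $t_iPt_{i+1}$ for $P\in\mathcal{P}_{i+1}$ are edge-disjoint, and their interior vertices are all non-heavy. My plan is to build $\delta$ internally vertex-disjoint ones by transfinite recursion of length $\delta$. At each stage fewer than $\delta$ paths have been chosen, so fewer than $\delta$ interior vertices are used, since each path has at most $p$ vertices.

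The key point is that each such interior vertex $u$ lies on fewer than $\delta$ of the segments, because $u$ is not heavy. Regularity then makes the union of the forbidden segments have size $<\delta$, so some unused segment avoids all earlier interior vertices and can be added. Interior vertices equal to $t_i$ or $t_{i+1}$ do not matter, as paths are paths.

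The subtle point is defining heaviness so that non-heavy means "on fewer than $\delta$ of the current segments". I would therefore define heaviness relative to the family $\mathcal{P}_i$, restricted to positions after $t_i$. This is consistent, because $\mathcal{P}_{i+1}\subseteq\mathcal{P}_i$ only decreases the counts.

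Distinctness of the vertices of $Q$ is automatic, since they occur in order on a path. For the moreover part, take $\mathcal{P}'=\mathcal{P}_k$. Each of its paths contains $t_0,\dots,t_k$ in order, so $k\leq p$.
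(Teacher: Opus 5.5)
Your proposal has a genuine gap at the choice of $t_{i+1}$. Pigeonhole over the $\leq p$ positions only gives $\delta$ paths whose first heavy vertex sits at the same \emph{position}, not at the same \emph{vertex}. Regularity would give a common vertex only if there were fewer than $\delta$ heavy vertices, but there can be $\delta$ of them, and then every fibre of $P\mapsto v_P$ may be small.

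Here is a concrete counterexample. Let $\delta=\aleph_0$ and $p=3$. Take vertices $a,b,h_0,h_1,\dots$ and an injection $f\colon\mathbb{N}\times\mathbb{N}\to\mathbb{N}$ with $f(n,m)\neq n$ (e.g.\ $f(n,m)=2^n3^{m+1}$). Let $R_{n,m}$ be the path $a\,h_{f(n,m)}\,h_n\,b$, each with its own parallel edges (subdivide every edge once if you prefer a simple graph). Each $h_n$ lies on all $R_{n,m}$ with $m\in\mathbb{N}$, so every $h_n$ is heavy at $t_0=a$. Hence $v_{R_{n,m}}=h_{f(n,m)}$, and these vertices are pairwise distinct. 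So no vertex is the first heavy vertex of more than one path. Your step (i) cannot help either, since discarding fewer than $\delta$ paths never changes which vertices are heavy. The lemma itself holds in this example (e.g.\ $Q\colon a\,h_0\,b$), but your rule produces no $t_1$.

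What is missing is a rule for $t_{i+1}$ that is uniform over $\delta$ paths and still keeps the interiors of the segments $t_iPt_{i+1}$ light. The paper gets this from Menger's theorem by induction on $p$. Either $\kappa_V(a,b)\geq\delta$ and $Q\colon ab$ works, or there is an $a{-}b$ separator $S$ with $|S|<\delta$. In the second case regularity gives some $s\in S$ on $\delta$ paths. The induction hypothesis, applied to the shorter families $(aPs)$ and then $(sPb)$, yields two halves of $Q$ meeting only in $s$.

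Alternatively, you can rescue your greedy scheme without Menger by choosing the position globally rather than per path. Let $j\geq 1$ be least such that some vertex $v$ occupies position $j$ on $\delta$ of the segments $t_iPb$ ($P\in\mathcal{P}_i$); such $j\leq p$ exists because $b$ qualifies by pigeonhole on lengths. Set $t_{i+1}:=v$ and let $\mathcal{P}_{i+1}$ consist of the corresponding paths. Suppose an interior vertex of the segments $t_iPt_{i+1}$ ($P\in\mathcal{P}_{i+1}$) lay on $\delta$ of them. Pigeonhole over the finitely many positions $1,\dots,j-1$ would then put it at a single position $j'<j$ on $\delta$ segments, contradicting the minimality of $j$. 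So every interior vertex lies on fewer than $\delta$ segments, and your transfinite recursion for $\kappa_V(t_i,t_{i+1})\geq\delta$ goes through unchanged.
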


\begin{proof}
    We proceed by induction on $p$.
    If $p = 1$, then any two paths in $\cP$ are internally vertex-disjoint and hence $\kappa_V(a,b) = \delta$.
    Moreover, if $\kappa_V(a,b) \geq \delta$, the sequence $Q\colon ab$ together with the entire path system $\mathcal{P}$ trivially verifies the statement.
    
    Hence, we might assume $p\geq 2$ and $\kappa_V(a,b)<\delta$.
    In particular, we can fix a corresponding set $S\subseteq V(G)\setminus \{a,b\}$ with $|S|<\delta$ and which is an $a{-}b$ separator, thus meeting every path in $\cP$.
    Since $\delta$ is regular and infinite, some $s \in S$ must hit $\delta$ many paths from $\mathcal{P}$.
    Let $\cP_0 \subseteq \mathcal{P}$ be the subfamily consisting of those paths.
    By induction, and regarding the $a{-}s$ path family $(aPs)_{P \in \cP_0}$, we find a sequence $Q_1 \colon a=t_0 t_1\dots t_{k_1}=s$ with  $\kappa_V(t_i, t_{i+1}) \geq \delta$ for every $i \in [k_1-1]$, as well as a suitable subfamily $\cP_1$ of $\cP_0$ of size $\delta$ that satisfies the `moreover' part with respect to $Q_1$.
   Similarly, applying the induction assumption to $(sPb)_{P \in \cP_1}$, we find a sequence $Q_2 \colon s=l_0 l_1\dots l_{k_2}=b$ such that $\kappa_V(l_i, l_{i+1}) \geq \delta$ for every $i \in [k_2 -1]$, and a suitable subfamily $\cP_2 \subseteq \cP_1$ of size $\delta$ that satisfies the 'moreover' part with respect to $Q_2$.

    Also, $Q_1$ and $Q_2$ intersect only in the vertex $s$.
    Indeed, every path $P \in \cP_2$ first traverses every vertex of $Q_1$ on the subpath $aP s$ and then every vertex of $Q_2$ on the subpath $sP b$.
    Hence, the concatenation of the two sequences ${Q_1}^\frown Q_2$ and the corresponding system $\cP_2$ satisfy the lemma and the `moreover' part of the statement.
\end{proof}

With the help of the previous lemma, we now prove our first main theorem, which we recall here for convenience.

\ZelinkasConjecture*

\begin{proof} We divide the proof into two cases, depending on whether $\delta$ is regular or singular.

\smallskip
\textbf{The regular case.} If $\delta$ is regular, then by \cref{lem:regularCardCase} there exists $t_0,t_1,\dots,t_n$ with $a=t_0$, $b=t_n$ and such that $\kappa_V(t_i,t_{i+1})\geq \delta$ for every $i \in [n-1]$.
    In other words, there exists a family $\mathcal{Q}_i$ of (internally) disjoint $t_i$-$t_{i+1}$ paths, and $\mathcal{Q}_i$ has size $\delta$.
    Then, let $P_0$ be a concatenation of the form $P_0:=Q_0^0Q_1^0Q_2^0\dots Q_{n-1}^0$, where any two paths $Q_i^0 \in \mathcal{Q}_i$ and $Q_j^0 \in \mathcal{Q}_j$ are internally disjoint for every $0 \leq i < j < n$.
    For some ordinal $\alpha<\delta$, suppose that we have defined a family of edge-disjoint paths $(P_{\beta}=Q_0^{\beta}Q_1^{\beta}\dots Q_{n-1}^{\beta})_{\beta <\alpha}$ such that $Q_i^{\gamma}\in \mathcal{Q}_i$ is internally disjoint from $Q_{j}^{\gamma'}\in\mathcal{Q}_j$ for every $0\leq i,j<n $ and $\gamma,\gamma'<\alpha$. 
    Since $\alpha < \delta$ and $\delta$ is regular, the set of vertices $X_{\alpha}:=\bigcup_{\beta <\alpha}V(P_\beta)$ has size $<\delta$.
    In particular, we may fix $Q_0^{\alpha}$ as a $t_0-t_1$ path internally disjoint from $X_{\alpha}$.
    Similarly, for $1\leq i <n$, we choose $Q_i^{\alpha}\in \mathcal{Q}_i$ to be internally disjoint from $X_{\alpha}\cup \bigcup_{j<i}V(Q_j^{\alpha})$.
    We finish this recursive process after setting $P_{\alpha} = Q_0^{\alpha}Q_{1}^{\alpha}\dots Q_{n-1}^{\alpha}$.
    Then, $(P_{\alpha})_{\alpha <\delta}$ is a family of edge-disjoint $a{-}b$ paths such that, for $\alpha < \beta< \delta$, the paths $P_{\alpha}$ and $P_{\beta}$ intersect on $t_0,t_1,t_2,\dots,t_n$ and precisely in that order.

 \smallskip
\textbf{The singular case.} If $\delta$ is singular, then fix an increasing sequence $(\delta_{\alpha})_{\alpha < \operatorname{cf}(\delta) }$ of regular cardinals converging towards $\delta$ and so that $\operatorname{cf}(\delta) < \delta_{0}$.
    Let $H_{\alpha}$ be the graph with vertex set $V(G)$ and edge set $\{ab \colon \kappa_V(a,b)\geq \delta_{\alpha} \}$, for every ${\alpha}< \operatorname{cf}(\delta)$.
    Since each $\delta_{\alpha}$ is regular, by the `Moreover'-part in \cref{lem:regularCardCase} we find a family $(P_{\alpha})_{{\alpha}<\operatorname{cf}(\delta)}$ where each $P_{\alpha}$ is an $a{-}b$ path in $H_{\alpha}$ of length $\leq p$.

    Let $H$ be the edge-disjoint union of all paths $P_{\alpha}$. 
    Formally, we define a multigraph $H$ with vertex set $V(H)=V(G)$ and edges $E(H)=\dot{\bigcup}_{{\alpha}<\operatorname{cf}(\delta)}E(P_{\alpha})$.
    By construction, there exists $\operatorname{cf}(\delta)$ pairwise edge-disjoint $a{-}b$ paths in $H$ of length $\leq p$ and hence, by the regular case of this theorem, also a family $(Q_{\alpha})_{{\alpha}<\operatorname{cf}(\delta)}$ of pairwise edge-disjoint and order-compatible $a{-}b$ paths in $H$.

    The proof will be concluded by relying on \cref{lem:auxilaryGraph} below. 
    For this, it suffices to prove that for every ${\alpha}<\operatorname{cf}(\delta)$, there exists some $\beta<\operatorname{cf}(\delta)$ such that $\kappa^G_V(u,v) \geq \delta_{
    \alpha}$, for every edge $uv \in E(Q_\beta)$.
But this is easy: given some $\alpha<\operatorname{cf}(\delta)$, the set of edges $\dot\bigcup_{\gamma<\alpha}E(P_\gamma)$ has size $<\operatorname{cf}(\delta)$.
    Thus, there exists a path $Q_\beta$ in $H$ that is edge-disjoint from $\dot\bigcup_{\gamma<\alpha}E(P_\gamma)$ and hence uses edges from paths $P_\mu$ where $\mu \geq \alpha$.
    These paths witness that $\kappa^G_V(u,v)\geq \delta_\alpha$ for every edge $uv \in E(Q_\beta)$.
\end{proof}

\begin{lemma}\label{lem:auxilaryGraph}
    Fix a graph $G$. Let $\delta$ be a singular infinite cardinal, and $(\delta_{\alpha})_{\alpha< \operatorname{cf}(\delta)}$ an increasing sequence of regular cardinals converging towards $\delta$ with $\operatorname{cf}(\delta)<\delta_{0}$.
    Let $H_{\alpha}$ be the graph on vertex set $V(G)$ and edge set $\{ xy \colon \kappa_V(x,y)\geq \delta_{\alpha} \}$ for every $\alpha<\operatorname{cf}(\delta)$.
    
    If there exists a choice of $a{-}b$ paths $P_{\alpha} \in H_{\alpha}$ (not necessarily distinct) such that $P_{\alpha}$ and $P_{\beta}$ are order-compatible for every $\alpha \neq \beta$, then there exist $\delta$ many edge-disjoint $a{-}b$ paths in $G$ that are pairwise order-compatible.
\end{lemma}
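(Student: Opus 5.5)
We build the required paths by a transfinite recursion over $\alpha<\operatorname{cf}(\delta)$. At stage $\alpha$ we produce $\delta_\alpha$ many $a{-}b$ paths in $G$. Each is obtained from the \emph{skeleton} $P_\alpha$ by replacing every edge $uv\in E(P_\alpha)$ with a $u{-}v$ path of $G$. This $u{-}v$ path is taken from a fixed family $\mathcal{R}_{uv}$ of $\delta_\alpha$ internally disjoint $u{-}v$ paths, which exists because $\kappa_V(u,v)\geq\delta_\alpha$. The new internal vertices must avoid everything used before. Since $\sum_{\alpha<\operatorname{cf}(\delta)}\delta_\alpha=\delta$, this yields $\delta$ paths in total.

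Concretely, let $Z:=\bigcup_{\beta<\operatorname{cf}(\delta)}V(P_\beta)$. Then $|Z|\leq\operatorname{cf}(\delta)\cdot\aleph_0<\delta_0$. Suppose we are at stage $\alpha$ and want the $\xi$-th path $R_{\alpha,\xi}$, where $\xi<\delta_\alpha$. Let $X$ be the union of $Z$ with the vertex sets of all previously constructed paths $R_{\beta,\eta}$. Let $F$ be the set of previously used edges, and let the used $\mathcal{R}$-members be those paths from the families $\mathcal{R}_{uv}$ already chosen earlier.

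We go through the edges of $P_\alpha$ one by one. For each edge $uv$ we choose a path in $\mathcal{R}_{uv}$ which
\begin{enumerate}
\item has no interior vertex in $X$ nor in the finitely many vertices already chosen for $R_{\alpha,\xi}$,
\item contains no edge of $F$, and
\item has not been used before.
\end{enumerate}
Concatenating these choices gives $R_{\alpha,\xi}$, which is a genuine path because the replacement paths are internally disjoint from each other and from $Z\supseteq V(P_\alpha)$.

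The key point, and the main obstacle, is the counting showing that such a choice exists, i.e.\ that $|X|,|F|<\delta_\alpha$. Since the members of $\mathcal{R}_{uv}$ are internally disjoint, a set of fewer than $\delta_\alpha$ vertices meets the interiors of fewer than $\delta_\alpha$ of them. Similarly, a set of fewer than $\delta_\alpha$ edges and fewer than $\delta_\alpha$ used paths exclude fewer than $\delta_\alpha$ members of $\mathcal{R}_{uv}$. Hence it suffices to bound the number of previously built paths, each of which is finite. There are at most $\lambda := |\alpha|\cdot\sup_{\beta<\alpha}\delta_\beta$ paths from earlier stages, plus $|\xi|<\delta_\alpha$ paths from the current stage.

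If $\alpha=\beta+1$ is a successor, then $\lambda\leq\delta_\beta<\delta_\alpha$. If $\alpha$ is a limit, then $\mu:=\sup_{\beta<\alpha}\delta_\beta$ has cofinality $\operatorname{cf}(\alpha)\leq|\alpha|<\operatorname{cf}(\delta)<\delta_0\leq\mu$. So $\mu$ is singular, and therefore $\mu\neq\delta_\alpha$ as $\delta_\alpha$ is regular. Since $\mu\leq\delta_\alpha$, we get $\mu<\delta_\alpha$, and so $\lambda<\delta_\alpha$ again.

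It remains to verify the two properties of the family $(R_{\alpha,\xi})$.
\begin{itemize}
\item \emph{Edge-disjointness.} Each edge of $R_{\alpha,\xi}$ lies on one of its chosen replacement paths. By condition (ii) no such edge was used by an earlier path, and a later path in turn avoids all edges of $R_{\alpha,\xi}$.
\item \emph{Order-compatibility.} The interior vertices of each replacement path lie outside $Z$ and outside all other constructed paths. Hence for $(\alpha,\xi)\neq(\beta,\eta)$ we get $V(R_{\alpha,\xi})\cap V(R_{\beta,\eta})=V(P_\alpha)\cap V(P_\beta)$. Moreover, $R_{\alpha,\xi}$ traverses $V(P_\alpha)$ in the order of $P_\alpha$. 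So the common vertices occur in the orders induced by $P_\alpha$ and $P_\beta$. These orders agree, by hypothesis if $\alpha\neq\beta$ and trivially if $\alpha=\beta$.
\end{itemize}
This gives $\delta$ edge-disjoint, pairwise order-compatible $a{-}b$ paths in $G$.
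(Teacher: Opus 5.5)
Your proof is correct and follows the paper's argument. Both run a transfinite recursion in which each skeleton edge of $P_\alpha$ is replaced by a member of a family of $\delta_\alpha$ internally disjoint connecting paths, chosen to avoid all previously used vertices and edges, and both rely on the same bound $\sup_{\beta<\alpha}\delta_\beta\cdot\aleph_0+\operatorname{cf}(\delta)\cdot\aleph_0<\delta_\alpha$. The only differences are cosmetic: you merge the paper's nested recursion (outer over $\alpha$, inner re-running the regular case inside $G-S$) into a single recursion over pairs $(\alpha,\xi)$, and you spell out the limit-stage estimate and the requirement that the interiors avoid all of $Z\supseteq V(P_\alpha)$ more carefully than the paper does.
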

\begin{proof}

    We will define families $(\mathcal{Q}_\alpha)_{\alpha<\operatorname{cf}(\delta)}$ of $a{-}b$ paths inductively, such that
    \begin{enumerate}
        \item $\mathcal{Q}_\alpha$ consists of $\delta_\alpha$ many pairwise edge-disjoint and order-compatible $a{-}b$ paths,
        \item \label{item:auxilaryModel} $V(Q) \cap \bigcup_{\beta<\operatorname{cf}(\delta)}V(P_\beta)=V(P_\alpha)$ for every path $Q \in \mathcal{Q}_\alpha$ and $Q$ traverses $V(P_\alpha)$ in exactly the order induced by $P_\alpha$, and 
        \item \label{item:auxilaryOrderComp} any two paths $Q_\alpha \in \mathcal{Q}_\alpha$ and $Q_\beta \in \mathcal{Q}_\beta$ are edge-disjoint and intersect in exactly $V(Q_\alpha) \cap V(Q_\beta) = V(P_\alpha) \cap V(P_\beta)$.
    \end{enumerate}

    Note that \cref{item:auxilaryOrderComp} implies that any two paths $Q_\alpha \in \mathcal{Q}_\alpha$ and $Q_\beta \in \mathcal{Q}_\beta$ with $\alpha \neq \beta$ are edge-disjoint but also order-compatible: Their common intersection is exactly $V(P_\alpha) \cap V(P_\beta)$, and by \cref{item:auxilaryModel} they traverse their common intersection in exactly the same order as $P_\alpha$ and $P_\beta$ do.
    Since we assumed $P_\alpha$ and $P_\beta$ to be order-compatible, $Q_\alpha$ and $Q_\beta$ are also order-compatible.

     Let $\beta<\operatorname{cf}(\delta)$ and assume we have defined $\mathcal{Q}_\alpha$ for every $\alpha<\beta$.
    Let $$S:= \bigg(\bigcup_{\gamma<\beta}\bigcup_{Q \in \mathcal{Q}_\gamma} (V(Q) \cup E(Q)) \cup \bigcup_{\gamma<\operatorname{cf}(\delta)} V(P_\gamma) \bigg) \setminus V(P_{\beta})$$

    We have $|S|\leq \sup \{ \delta_{\alpha} \colon \alpha < \beta \} \cdot \omega + \operatorname{cf}(\delta) \cdot \omega < \delta_{\beta}$, where in the last inequality we used the fact that $\beta < \operatorname{cf}(\delta) < \delta_{\beta}$ and $\delta_{\beta}$ is regular.
        
    It holds that $|S|<\delta_{\beta}$ and, since $P_{\beta} \in H_{\beta}$, any two adjacent vertices on $P_{\beta}$ have vertex connectivity at least $\delta_{\beta}$ in $G$ (and in $G-S$ as well).
    Thus, as in the proof of the regular case of \cref{thm_zelinaconj},     there exist $\delta_{\beta}$ many pairwise edge-disjoint and order-compatible $a{-}b$ paths in $G-S$ satisfying \cref{item:auxilaryModel}, which we pick as $\mathcal{Q}_{\beta}$. 
    By definition of $S$, any two paths $Q_\alpha \in \mathcal{Q}_\alpha$ and $Q_{\beta} \in \mathcal{Q}_{\beta}$, for some $\alpha<\beta$, intersect in exactly $V(P_\alpha) \cap V(P_{\beta})$ and are edge-disjoint.
    Hence, $\mathcal{Q}_{\beta}$ satisfies \cref{item:auxilaryOrderComp}.
    
    This finishes the construction of $\cQ_{\beta}$ and the lemma follows.
\end{proof}

\section{Order compatibility is transitive - the countable case}
\label{sec_ordercomp_cble}

In this section we show that $\approx_\omega$ is a transitive relation, thus establishing the countable case of Theorem~\ref{thm_main2}.
To that aim, we begin with a useful lemma on concatenating paths:

\begin{lemma}\label{lem:Concatenation}
    Let $a,u,v,c\in V(G)$ be vertices. 
    Fix two pairwise edge-disjoint and order-compatible paths $aPu$ and $aP'v$, as well as two pairwise edge-disjoint and order-compatible paths $uQc$ and $vQ'c$.
    If $(aPu)\cap (uQc) = \{u\}$ and $(aP'v)\cap(vQ'c) = \{v\}$, then the concatenations $X:=aPuQc$ and $X':=aP'vQ'c$ are distinct $a{-}c$ paths.
    Moreover, if $(aPu)\cap (vQ'c)\subseteq \{u\}$ and $(aP'v)\cap (uQc)\subseteq \{v\}$, then $X$ and $X'$ are edge-disjoint and order-compatible.
\end{lemma}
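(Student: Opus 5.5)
The plan is to deal first with the path property, then the edge sets, and finally the order of common vertices. Throughout I assume, as the paper intends, that the paths are nontrivial, i.e.\ $a\neq u$ and $a\neq v$.

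\emph{Paths and distinctness.} Since $aPu$ and $uQc$ meet only in $u$, the concatenation $X$ visits no vertex twice, so it is an $a{-}c$ path. The same holds for $X'$. The two paths $aPu$ and $aP'v$ are edge-disjoint and nontrivial. Hence the first edge of $X$ (at $a$) lies in $E(P)$, and it is not the first edge of $X'$, which lies in $E(P')$. So $X\neq X'$.

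\emph{Edge-disjointness under the extra hypothesis.} We have
\[
E(X)\cap E(X') \subseteq \bigl(E(P)\cap E(P')\bigr)\cup\bigl(E(Q)\cap E(Q')\bigr)\cup\bigl(E(P)\cap E(Q')\bigr)\cup\bigl(E(Q)\cap E(P')\bigr).
\]
The first two sets are empty by assumption. Suppose an edge lies in both $P$ and $Q'$. Then both of its endvertices lie in $V(aPu)\cap V(vQ'c)\subseteq\{u\}$. This is impossible, since edges are not loops. The same argument shows that $E(Q)\cap E(P')$ is empty.

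\emph{Order-compatibility.} This is the step needing care, because of the junction vertices $u$ and $v$. Let $w\in V(X)\cap V(X')$, and consider where $w$ lies.
\begin{itemize}
\item If $w\in P$, then $w\in P'$ or $w\in Q'$. The latter forces $w=u$, by the hypothesis $(aPu)\cap(vQ'c)\subseteq\{u\}$.
\item If $w\in Q$, then symmetrically $w\in Q'$ or $w=v\in P'$.
\end{itemize}
Since $u\in P\cap Q$ and $v\in P'\cap Q'$, every common vertex lies in $A:=V(P)\cap V(P')$ or in $B:=V(Q)\cap V(Q')$. For instance, if $w=u\in Q'$, then $u\in Q\cap Q'=B$.

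Now compare two distinct common vertices $w_1,w_2$.
\begin{itemize}
\item If both lie in $A$, then $X$ and $X'$ order them as $P$ and $P'$ do, and these orders agree because $P$ and $P'$ are order-compatible.
\item If both lie in $B$, the same argument applies using the order-compatible paths $Q$ and $Q'$.
\item If $w_1\in A$ and $w_2\in B$, then $w_1$ precedes $w_2$ on $X$, because every vertex of $P$ comes no later than every vertex of $Q$ on $X$, with $u$ the only shared vertex. Likewise $w_1$ precedes $w_2$ on $X'$.
\end{itemize}
A vertex lying in both $A$ and $B$ must be the junction of both paths, that is $u=v$. Such a vertex occupies the same boundary position in both paths, so it causes no conflict. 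Hence $X$ and $X'$ traverse their common vertices in the same order.

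I expect the main obstacle to be the bookkeeping around $u$ and $v$ in this last step. The key point is the observation that cross intersections can only happen at these junction vertices, and each junction vertex can then be absorbed into $A$ or $B$.
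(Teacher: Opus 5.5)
Your proof is correct. It uses the same ingredients as the paper: cross-intersections $P\cap Q'$ and $Q\cap P'$ occur only at the junctions, the order comes from $P,P'$ or from $Q,Q'$, and a vertex of $P$ always precedes a vertex of $Q$. Where you differ is in how the order-compatibility step is organised.

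\textbf{The paper's route.} The paper first proves two claims:
\begin{enumerate}
\item a common vertex outside $\{u,v\}$ lying on $P$ (resp.\ $Q$) also lies on $P'$ (resp.\ $Q'$);
\item if $u\neq v$, the two junctions cannot both be common vertices, since otherwise $Q$ and $Q'$ would traverse $u,v$ in opposite orders.
\end{enumerate}
It then uses the second claim, together with the $a\leftrightarrow c$ symmetry, to assume that one of the two compared vertices lies in $(V(P)\cap V(P'))\setminus\{u,v\}$. Only after that does it split on the position of the other vertex.

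\textbf{Your route.} You absorb the junctions directly: every common vertex, including $u$ and $v$, lies in $A=V(P)\cap V(P')$ or in $B=V(Q)\cap V(Q')$. The three cases $A/A$, $B/B$, $A/B$ then finish uniformly. This makes the paper's second claim and the symmetry reduction unnecessary. The configuration ``$u\neq v$ both common'' simply falls into the $B/B$ case and is covered by the order-compatibility of $Q,Q'$. The paper shows this configuration cannot occur, but you never need that. Your closing remark that $A\cap B\subseteq\{u\}\cap\{v\}$ is true but not needed, since each case already proves agreement on its own.

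\textbf{Distinctness.} Here you are more careful than the paper, which only says the path property is clear. Some nondegeneracy really is needed for the first assertion of the lemma. Take $u=a$, $v=c$, let $P$ and $Q'$ be trivial, and let $Q=P'$ be the same $a{-}c$ path; then all hypotheses of the first part hold, yet $X=X'$. Your assumption $a\neq u$ and $a\neq v$ excludes this case. Under the Moreover hypotheses, distinctness also follows from edge-disjointness as soon as $a\neq c$.
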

\begin{proof}
    Since $(aPu)\cap (uQc) = \{u\}$ and $(aP'v)\cap(vQ'c) = \{v\}$, it is clear that both $X$ and $X'$ are $a{-}c$ paths, which we assume to be oriented from $a$ to $c$.
    Moreover, since each of the intersections $(aPu)\cap (vQ'c)$ and $(aP'v)\cap (uQc)$ contains at most one vertex, the pairs $\{aPu,vQ'c\}$ and $\{aP'v,uQc\}$ are edge-disjoint.
    Hence, the edge-disjointness of $X$ and $X'$ now follows from the edge-disjointness assumption over $\{aPu,aP'v\}$ and $\{uQc,vQ'c\}$.

    What remains is to prove that $X$ and $X'$ are order-compatible.
    Towards this goal, let us observe the following two claims.
    \begin{claim}\label{FirstCaseDistinction}
        Fix $x\in X\cap X'\setminus \{u,v\}$.
        If $x\in (aPu)$ (resp. $x\in (uQc)$), then $x\in (aP'u)$ (resp. $x\in (vQ'c)$) as well.
    \end{claim}
    \begin{proof}\renewcommand{\qedsymbol}{$\blacksquare$}
    After all, by definition of $X'$, we would have $x\in (aPu)\cap (vQ'c)\subseteq \{u\}$ (resp. $x\in (aP'v)\cap (uQc)\subseteq \{v\}$) in the contrary.
    \end{proof}
    \begin{claim}  \label{DistinctionUV} 
        If $u \neq v$, then $\{u, v\} \not\subseteq X \cap X'$.
    \end{claim}
    \begin{proof}\renewcommand{\qedsymbol}{$\blacksquare$}
    Suppose towards a contradiction that $u\neq v$ and $\{u,v\}\subseteq X\cap X'$.
    Then either $v\in (aPu)$ or $v\in (uQc)$.
    But the former case contradicts the assumption $u\neq v$, since it would imply $v\in (aPu)\cap (vQ'c) \subseteq \{u\}$.
    Hence, $v\in (uQc)$.
    
    Moreover, this implies $u\in (vQ'c)$: Otherwise, $u \in (aP'v)$ and thus, $u,v\in (aP'v)\cap (uQc)$, contradicting the assumption that $(aP'v)\cap (uQc) \subseteq \{ v \}$.
    Therefore, $u,v\in (vQ'c)\cap (uQc)$ with $vQ'c$ traversing first $v$ and then $u$, while $uQc$ traverses first $u$ and then $v$.
    But this contradicts the order-compatibility of $Q$ and $Q'$.
    \end{proof}

    Now, let $x \neq x' \in X \cap X'$ and, by \cref{DistinctionUV}, we can assume without loss of generality that $x\notin \{u,v\}$.
    Then either $x \in (aPu)$ or $x \in (uQc)$ due to the definition of $X$.
    Incidentally, unless by exchanging the roles of $a$ with $c$, we can further suppose that $x\in (aPu)$.
    Hence, \cref{FirstCaseDistinction} shows that $x\in (aP'v)$  as well.
    By \cref{FirstCaseDistinction}, we have the following case distinctions:
    \begin{itemize}
        \item If $x'\in (aPu)$, then $x'\in (aP'v)$ or $x'=u$.
        In the first case, it follows that $x,x' \in (aPu) \cap (aP'v)$ and, thus, $X$ and $X'$ traverse both vertices $x, x'$ in the same common order induced by the order-compatible pair $P,P'$. If $x'=u$ but $u\notin (aP'v)$, however, then $u\in (vQ'c)$ by definition of $X'$. In particular, by their own definition, both $X$ and $X'$ first traverse $x$ and then $x'$;

        \item If $x'\in (uQc)$, then $x'\in (vQ'c)$ or $x'=u$. 
        In the first case, it follows that $x \in (aPu)\cap (aP'v)$ and $x' \in (uQc) \cap (vQ'c)$ and, by their own definition, $X$ and $X'$ traverse first $x$ before $x'$.
        If $x'=u$ but $u \notin (vQ'c)$, it follows that $u \in (aP'v)$ by definition of $X'$ and hence $x,x' \in (aPu) \cap (aP'v)$ and $X$ and $X'$ traverse both vertices $x, x'$ in the same common order induced by the order-compatible pair $P,P'$. \qedhere
    \end{itemize}
\end{proof}

From now on in a fixed graph $G$, let $a,b,c\in V(G)$ be three distinct vertices such that $a\approx_{\omega}b$ and $b\approx_{\omega}c$, which means that there is
\begin{itemize}
\item  a system $\mathcal{P}_{a,b}$ of $\aleph_0$ pairwise edge-disjoint and order-compatible $a{-}b$ paths, and
\item a system $\mathcal{P}_{b,c}$ of $\aleph_0$ pairwise edge-disjoint and order-compatible $b{-}c$ paths.
\end{itemize}
We shall conclude that there is also a system $\mathcal{P}_{a,c}$ of $\aleph_0$ pairwise edge-disjoint and order-compatible paths connecting $a$ and $c$, thus concluding $a\approx_{\omega}c$.
To that aim, for two given infinite subfamilies $\mathcal{P}\subseteq \mathcal{P}_{a,b}$ and $\mathcal{Q}\subseteq \mathcal{P}_{b,c}$, we say that $v\in V(G)$ is a \defn{$(\mathcal{P},\mathcal{Q})$-terminal} if $v\in \bigcup\mathcal{P}$, $v\in Q$ for every $Q\in \mathcal{Q}$ and, moreover, $(vQc)\cap \bigcup\mathcal{P} = \{v\}$.
Considering this definition, the two criteria below provide first conditions for finding systems of edge-disjoint and order-compatible $a{-}c$ paths:

\begin{lemma}\label{lem:InfiniteHitting}
    Suppose that $\mathcal{P}\subseteq \mathcal{P}_{a,b}$ and $\mathcal{Q}\subseteq \mathcal{P}_{b,c}$ are infinite subfamilies that admit a $(\mathcal{P},\mathcal{Q})$-terminal $v$.
    If $v$ belongs to infinitely many paths in $\mathcal{P}$, then there is a system $\mathcal{P}_{a,c}$ of pairwise edge-disjoint and order-compatible $a{-}c$ paths.
\end{lemma}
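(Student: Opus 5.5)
We build the system by concatenating $a{-}v$ initial segments of paths in $\mathcal{P}$ with $v{-}c$ final segments of paths in $\mathcal{Q}$, and invoke \cref{lem:Concatenation} with $u=v$ for every pair.

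First we fix the pieces. Let $\mathcal{P}_v\subseteq\mathcal{P}$ be the infinite subfamily of paths through $v$. For $P\in\mathcal{P}_v$ let $aPv$ be its initial segment, oriented from $a$. Any two of these segments are edge-disjoint, since the paths in $\mathcal{P}$ are. They are also order-compatible, because subpaths of order-compatible paths starting at the common endpoint $a$ still traverse their common vertices in the same order. Likewise, for $Q\in\mathcal{Q}$ the final segments $vQc$ are pairwise edge-disjoint and order-compatible. The terminal condition $(vQc)\cap\bigcup\mathcal{P}=\{v\}$ gives $(aPv)\cap(vQ'c)=\{v\}$ for every $P\in\mathcal{P}_v$ and every $Q'\in\mathcal{Q}$.

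Next we pair them. Choose injectively $P_n\in\mathcal{P}_v$ and $Q_n\in\mathcal{Q}$ for $n\in\N$, and set $X_n:=aP_nvQ_nc$. For $n\neq m$ we apply \cref{lem:Concatenation} with $u=v$:
\begin{itemize}
\item the hypotheses $(aP_nv)\cap(vQ_nc)=\{v\}$ and $(aP_mv)\cap(vQ_mc)=\{v\}$ hold by the terminal property;
\item the `moreover' hypotheses $(aP_nv)\cap(vQ_mc)\subseteq\{v\}$ and $(aP_mv)\cap(vQ_nc)\subseteq\{v\}$ hold for the same reason.
\end{itemize}
Hence the $X_n$ are $a{-}c$ paths that are pairwise edge-disjoint and order-compatible. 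So $\{X_n:n\in\N\}$ is the required countably infinite system $\mathcal{P}_{a,c}$.

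The main point to check is that \cref{lem:Concatenation} does not require $u\neq v$. It allows $u=v$: its Claim~\ref{DistinctionUV} is vacuous in that case, and the rest of its proof goes through unchanged. The only other subtle point is that order-compatibility passes to the initial and final subpaths, which is immediate once all paths are oriented from $a$ to $c$. With these observations the proof is essentially a direct application of the concatenation lemma.
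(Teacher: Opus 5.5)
Your proof is correct and is essentially the paper's argument: enumerate the paths of $\mathcal{P}$ through $v$ and the paths of $\mathcal{Q}$, use the terminal property to get $(aP_nv)\cap(vQ_mc)=\{v\}$ for all $n,m$, and apply \cref{lem:Concatenation} with $u=v$ to the concatenations $aP_nvQ_nc$. Your two extra remarks, that the concatenation lemma allows $u=v$ and that order-compatibility passes to initial and final segments, are points the paper uses without stating them.
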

\begin{proof}
    Fix an enumeration $\{Q_n\}_{n\in\mathbb{N}}$ for $\mathcal{Q}$, and an enumeration $\{P_n\}_{n\in\mathbb{N}}$ for the paths in $\mathcal{P}$ containing $v$.
    Due to the definition of a $(\mathcal{P},\mathcal{Q})$-terminal, we have $(aP_nv)\cap (vQ_mc) = \{v\}$ for every pair $n,m\in\mathbb{N}$. 
    Therefore, \cref{lem:Concatenation} shows that $\mathcal{P}_{a,c}:=\{aP_nvQ_nc\}_{n\in\mathbb{N}}$ is a system of pairwise edge-disjoint and order-compatible $a{-}c$ paths.
\end{proof}

\begin{lemma}\label{lem:NoTerminals}
    Let $\mathcal{P}\subseteq \mathcal{P}_{a,b}$ and $\mathcal{Q}\subseteq \mathcal{P}_{b,c}$ be infinite subfamilies.
     If no pair of infinite subfamilies $\mathcal{P}'\subseteq \mathcal{P}$ and $\mathcal{Q}' \subseteq \mathcal{Q}$ admits a $(\mathcal{P}',\mathcal{Q}')$-terminal, then there exists a system $\mathcal{P}_{a,c}$ of $\aleph_0$ pairwise edge-disjoint and order-compatible $a{-}c$ paths.
\end{lemma}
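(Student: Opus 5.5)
The plan is to build the $a{-}c$ paths one at a time by recursion, choosing at each step a fresh path $P_n\in\mathcal{P}$ and a fresh path $Q_n\in\mathcal{Q}$. We concatenate along the \emph{first} vertex of $Q_n$ that lies on $P_n$, and then use \cref{lem:Concatenation} to check edge-disjointness and order-compatibility against all earlier paths.

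\textbf{Step 1: controlling the paths of $\mathcal{Q}$.} The hypothesis says there is no $(\mathcal{P}',\mathcal{Q}')$-terminal for any infinite $\mathcal{P}'\subseteq\mathcal{P}$ and $\mathcal{Q}'\subseteq\mathcal{Q}$. First we turn this into a usable finiteness statement. Fix a finite set $F$ of paths in $\mathcal{P}$ and put $W=\bigcup F$, a finite vertex set.

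For $Q\in\mathcal{Q}$, let $v_Q$ be the last vertex of $Q$ in $\bigcup\mathcal{P}$, read from $b$ towards $c$. Note that $b\in\bigcup\mathcal{P}$, and $Q$ is finite, so $v_Q$ exists. Then $v_Q$ is a $(\mathcal{P},\{Q\})$-terminal in the natural sense. If infinitely many $Q$ shared the same $v_Q=v$, we would get a $(\mathcal{P},\mathcal{Q}')$-terminal with $\mathcal{Q}'$ infinite, which is forbidden. Hence each vertex is $v_Q$ for only finitely many $Q$.

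I will want a stronger version relative to subfamilies. For infinite $\mathcal{P}'\subseteq\mathcal{P}$, only finitely many $Q$ have a given last vertex in $\bigcup\mathcal{P}'$. Since $W$ is finite, all but finitely many $Q\in\mathcal{Q}$ have their last vertex in $\bigcup(\mathcal{P}\setminus F)$ lying outside $W$. That vertex is then on a path of $\mathcal{P}\setminus F$ and is not on $W$.

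\textbf{Step 2: the recursion.} Suppose $X_0,\dots,X_{n-1}$ have been built from $P_i\in\mathcal{P}$ and $Q_i\in\mathcal{Q}$, with $X_i=aP_iu_iQ_ic$. Let $W$ be the union of all these $P_i$ and $Q_i$.

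Choose $Q_n\in\mathcal{Q}$ edge-disjoint from and new to the previous paths. Among the infinitely many unused $P\in\mathcal{P}$, only finitely many meet the finite set $V(Q_n)\cap W$ or the previous $Q_i$ in vertices other than those forced. This is because paths in $\mathcal{P}$ are edge-disjoint, and finitely many vertices can each lie on infinitely many of them, so some care is needed here.

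The construction I would actually use is the following. Take $u_n$ to be the last vertex of $Q_n$ lying on $\bigcup(\mathcal{P}\setminus\{P_0,\dots,P_{n-1}\})$, and let $P_n$ be a path through $u_n$. Then $(aP_nu_n)\cap(u_nQ_nc)=\{u_n\}$ holds automatically.

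By Step 1, applied to the infinite family $\mathcal{P}$ minus the used paths, and passing to infinite subfamilies to avoid terminals repeatedly, we may choose $Q_n$ so that $u_n\notin W$. Moreover $u_n$ lies on only finitely many paths of $\mathcal{P}$, since otherwise \cref{lem:InfiniteHitting}-type reasoning would yield a terminal.

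\textbf{Step 3: verifying the hypotheses of \cref{lem:Concatenation}.} For $i<n$ we need $(aP_nu_n)\cap(u_iQ_ic)\subseteq\{u_n\}$ and $(aP_iu_i)\cap(u_nQ_nc)\subseteq\{u_i\}$.

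The second condition holds by maximality in the choice of $u_n$. We just have to make sure $P_i$ was still counted as available, so it is better to define $u_n$ as the last vertex on $\bigcup\mathcal{P}$ overall and to require $u_n$ to lie on an unused path.

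The first condition is the main obstacle. The tail $u_iQ_ic$ meets $\bigcup\mathcal{P}$ only in $u_i$, by the choice of $u_i$ as the last such vertex. So $(aP_nu_n)\cap(u_iQ_ic)\subseteq\{u_i\}$, and we need to exclude $u_i\in aP_nu_n$ when $u_i\ne u_n$. This is arranged by choosing $P_n$ through $u_n$ but avoiding the finitely many vertices $u_0,\dots,u_{n-1}$. It is possible because each $u_i$ lies on only finitely many paths of $\mathcal{P}$, so we discard those finitely many paths at the start of each step; this is exactly where the no-terminal hypothesis is used.

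Once these conditions hold, \cref{lem:Concatenation} gives pairwise edge-disjoint, order-compatible paths $X_n$. Passing the infinite families along by repeated diagonal choices then keeps the recursion going for all $n\in\mathbb{N}$.
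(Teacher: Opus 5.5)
Your overall architecture is the paper's. After the muddled first attempt in Step~2, you settle on $u_n=v_{Q_n}$, the last vertex of $Q_n$ on $\bigcup\mathcal{P}$. You use the no-terminal hypothesis to make $Q\mapsto v_Q$ finite-to-one, pick fresh $Q_n$ with $u_n$ off all previously used paths, concatenate with some $P_n\ni u_n$, and invoke \cref{lem:Concatenation}. This does give $(aP_iu_i)\cap(u_nQ_nc)=\emptyset$ for $i<n$.

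The gap is the other hypothesis, $(aP_nu_n)\cap(u_iQ_ic)\subseteq\{u_n\}$, i.e.\ $u_i\notin aP_nu_n$. Your justification is that each $u_i$ lies on only finitely many paths of $\mathcal{P}$ by the no-terminal hypothesis, and this is false. That hypothesis only forbids infinitely many $Q$ sharing a last vertex in $\bigcup\mathcal{P}'$, whereas $u_i$ is the last vertex of the single path $Q_i$. Likewise \cref{lem:InfiniteHitting} presupposes a terminal for an infinite $\mathcal{Q}$.

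For a counterexample, let $P_m=a\,w_0w_1\cdots w_m\,b$ and $Q_k=b\,w_k\,c$ for $m,k\in\mathbb{N}$, each path using its own parallel copies of edges. Both families are edge-disjoint and order-compatible. For every infinite $\mathcal{P}'\subseteq\mathcal{P}$ we have $\{w_k:k\in\mathbb{N}\}\subseteq\bigcup\mathcal{P}'$, so the last vertex of $Q_k$ in $\bigcup\mathcal{P}'$ is $w_k$, and there are no terminals. Yet every $w_k$ lies on infinitely many $P_m$. In any run of your recursion, $u_n=w_{k_n}\notin P_0\supseteq\{w_0,\dots,w_{k_0}\}$ forces $k_n>k_0$. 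So every path through $u_n$ passes through $u_0=w_{k_0}$ before $u_n$, a $P_n$ avoiding $u_0$ never exists, and the hypothesis of \cref{lem:Concatenation} genuinely fails.

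The conclusion is still true; what is missing is a direct argument for the case $u_i\in aP_nu_n$ with $i<n$. Since $P_i,P_n$ are order-compatible and both contain $u_i$, every common vertex of $aP_iu_i$ and $P_n$ lies on $aP_nu_i$. Combine this with $(u_jQ_jc)\cap\bigcup\mathcal{P}=\{u_j\}$ for $j\in\{i,n\}$ and with $u_n\notin P_i$. Then the common vertices of $X_i$ and $X_n$ are: those of $aP_iu_i$ and $aP_nu_i$ (ending with $u_i$), followed by those of $u_iQ_ic$ and $u_nQ_nc$. Both paths traverse them in this order; within each group the order agrees by order-compatibility of $P_i,P_n$, respectively $Q_i,Q_n$. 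Edge-disjointness follows as in \cref{lem:Concatenation}: $aP_iu_i$ and $u_nQ_nc$ are vertex-disjoint, and $u_iQ_ic$ and $aP_nu_n$ share only the vertex $u_i$.

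For comparison, the paper's proof (with $v_{n_k},P_{n_k}$ in place of your $u_n,P_n$) simply asserts $(aP_{n_k}v_{n_k})\cap(v_{n_j}Q_{n_j}c)=\emptyset$ for all $k\neq j$. Its choice $v_{n_{k+1}}\notin\bigcup_{i\le k}P_{n_i}$ only guarantees this for $k<j$, and the example above defeats it for $k>j$. So you located the genuinely delicate point correctly. It is closed by the order-compatibility argument above, not by a finiteness claim.
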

\begin{proof}
    Given a path $Q\in \mathcal{Q}$, we first note that $b\in P \cap Q$ for every $P\in \mathcal{P}$.
    In this case, the shortest $c{-}\bigcup\mathcal{P}$ subpath of $Q$ has the form $cQv_{Q}$ for some vertex $v_{Q}\in Q$, which then satisfies $(cQv_{Q})\cap \bigcup\mathcal{P} = \{v_Q\}$.
    
    If the set $\{v_Q\colon Q\in \mathcal{Q}\}$ is finite,
    we find an infinite subsystem $\mathcal{Q}'\subseteq \mathcal{Q}$ such that $v:=v_{Q} = v_{Q'}$ for every $Q,Q'\in \mathcal{Q}'$.
    But then $v$ is a $(\mathcal{P}, \mathcal{Q}')$-terminal contradicting the hypothesis of the lemma.
   
    Hence $\{v_Q\colon Q\in \mathcal{Q}\}$ is infinite and, after passing  to a suitable infinite subfamily of $\mathcal{Q}$, we may assume that $v_Q \neq v_{Q'}$ for distinct $Q,Q'\in \mathcal{Q}$.
    When writing $\mathcal{Q}=\{Q_n\}_{n\in\mathbb{N}}$ and $v_n:=v_{Q_n}$ for each $n\in\mathbb{N}$, we can even fix a path $P_n\in\mathcal{P}$ containing $v_n$: after all, 
    $v_n \in \bigcup \cP$ by the choice of $v_{Q_n}$.  


\begin{figure}[htbp]
    \centering
    \includegraphics[width=0.7\linewidth]{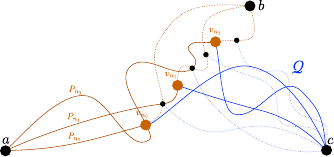}
    \caption{Choosing the paths $P_{n_0  },P_{n_1},P_{n_2},\ldots$ as in the proof of \cref{lem:NoTerminals}.}
    \label{fig:noterminalsTmp}
\end{figure}

    Next, set $n_0:=0$ and, for some $k\in\mathbb{N}$, suppose that we have defined indexes $n_0<n_1<\dots <n_k$ so that the paths $P_{n_0},P_{n_1},\dots,P_{n_k}$ are distinct. 
    Since $\{v_n\}_{n\in\mathbb{N}}$ comprises infinitely many distinct vertices and $\{P_{n_0},P_{n_1},\dots,P_{n_k}\}$ is a set of finitely many paths, we pick $n_{k+1}>n_k$ so that $v_{n_{k+1}}\notin \bigcup_{i\leq k}P_{n_i}$ (see Figure~\ref{fig:noterminalsTmp}).
    In particular, the path $P_{n_{k+1}}$ is distinct from $P_i$ for every $i\leq k$.
    At the end of this recursive process, $\{aP_{n_k}v_{n_k}\}_{k\in\mathbb{N}}$ and $\{v_{n_k}Q_{n_k}c\}_{k\in\mathbb{N}}$ define two systems, each consisting of pairwise edge-disjoint and order-compatible paths (because so are $\mathcal{P}$ and $\cQ$ respectively).
    
    Also, by definition of $v_{n_k}$, it follows that $(aP_{n_k}v_{n_k})\cap (v_{n_j}Q_{n_j}c) = \{v_{n_k}\}$ if $k=j$ and $(aP_{n_k}v_{n_k})\cap (v_{n_j}Q_{n_j}c) = \emptyset$ if $k\neq j$.
    Hence, \cref{lem:Concatenation} shows that $\mathcal{P}_{a,c}:=\{aP_{n_k}v_{n_k}Q_{n_k}c\}_{k\in\mathbb{N}}$ is a system of pairwise edge-disjoint and order-compatible paths connecting $a$ and $c$.
\end{proof}

Thus, for the remainder of the proof, we may suppose that the hypotheses of the previous two lemmas are not satisfied.
Then, for each $n\in\mathbb{N}$, we shall construct two infinite subfamilies $\mathcal{P}_n\subseteq \mathcal{P}_{a,b}$ and $\mathcal{Q}_n\subseteq \mathcal{P}_{b,c}$ that admit a $(\mathcal{P}_n,\mathcal{Q}_n)-$terminal $w_n$.
Moreover, we will ensure inductively that the following conditions are met:

\begin{enumerate}
    \item \label{item:ChoiceTerminal} $\mathcal{P}_{n+1}\subseteq \mathcal{P}_n$ and $\mathcal{Q}_{n+1}\subseteq \mathcal{Q}_n$ for every $n\in\mathbb{N}$. 
    In particular, $w_n\in Q$ for every $Q\in \mathcal{Q}_m$ and $m\geq n$;
    \item\label{item:ProgressingTerminal} $w_n$ belongs to finitely many paths in $\mathcal{P}_n$ and $w_n\notin \bigcup\mathcal{P}_{n+1}$, (so that $w_n$ is not a $(\cP_{n+1},\mathcal{Q}_{n+1}){-}$terminal).
\end{enumerate}

For the base case $n = 0$, let $\mathcal{P}_0\subseteq \mathcal{P}_{a,b}$ and $\mathcal{Q}_0\subseteq \mathcal{P}_{b,c}$ be two infinite families admitting a $(\mathcal{P}_0,\mathcal{Q}_0)$-terminal $w_0\in V(G)$, which exist since the hypothesis of \cref{lem:NoTerminals} is not satisfied. 
Similarly, since the hypothesis of \cref{lem:InfiniteHitting} is not verified, the terminal $w_0$ belongs to only finitely many paths in $\mathcal{P}_0$.

Suppose now that $\mathcal{P}_n$ and $\mathcal{Q}_n$ are defined for some $n\in\mathbb{N}$.
By \cref{item:ProgressingTerminal}, the set $\mathcal{P}_{n+1}'\subseteq \mathcal{P}_n$ of paths not containing $w_n$ is still infinite. 
Therefore, again because the hypothesis of \cref{lem:NoTerminals} is not satisfied, there are infinite subfamilies $\mathcal{P}_{n+1}\subseteq \mathcal{P}_{n+1}'$ and $\mathcal{Q}_{n+1}\subseteq \mathcal{Q}_n$ which admit a $(\mathcal{P}_{n+1},\mathcal{Q}_{n+1}){-}$terminal $w_{n+1}$.
And again due to the hypothesis of \cref{lem:InfiniteHitting} not applying, there are only finitely many paths in $\mathcal{P}_{n+1}$ that contain $w_{n+1}$.
In addition, $w_n\notin \bigcup\mathcal{P}_{n+1}$ by the choice of $\mathcal{P}_{n+1}'$ and since $\mathcal{P}_{n+1}\subseteq \mathcal{P}_{n+1}'$.

At the end of this inductive process, let $\Sigma_0:=\{cQw_0: Q\in\mathcal{Q}_0\}$ and $\Sigma_{n+1}:=\{w_{n}Qw_{n+1}:Q\in \mathcal{Q}_{n+1}\}$ for each $n\in\mathbb{N}$. 
In addition, denoting $w_{-1}:=c$, we will call $w_{n-1}$ the \defn{cut-vertex} of $\Sigma_n$.
Note that $\{ \Sigma_n \}_{n \in \mathbb{N}}$ is a family of well-defined paths because -- once $w_{n+1}$ is a $(\mathcal{P}_{n+1},\mathcal{Q}_{n+1}){-}$terminal -- we have $w_n,w_{n+1}\in Q$ for every $Q\in \mathcal{Q}_{n+1}$. 
In particular, since $\mathcal{Q}_n$ consists of infinitely many pairwise order-compatible paths for each $n\in\mathbb{N}$, we obtain the following observation:

\begin{lemma}\label{obs:UniqueInterval}
    For every pair of integers $0\leq m<n$ such that $(\bigcup\Sigma_m)\cap(\bigcup\Sigma_n)\neq \emptyset$, we have $n=m+1$ and $(\bigcup \Sigma_m)\cap(\bigcup \Sigma_{m+1}) = \{w_m\}$.
    
    Moreover, if $X$ and $X'$ are two $c{-}w_n$ paths such that $w_{i-1}Xw_{i},w_{i-1}X'w_{i}\in \Sigma_i$ and $w_{i-1}Xw_i\neq w_{i-1}X'w_i$ for every $0\leq i \leq n$, then $X$ and $X'$ are edge-disjoint and order-compatible. 
\end{lemma}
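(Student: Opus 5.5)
The plan is to exploit the nesting $\mathcal{Q}_{n+1}\subseteq\mathcal{Q}_n$ together with the order-compatibility of $\mathcal{P}_{b,c}$. Every path $Q\in\mathcal{Q}_n$ (for $n\geq m$) contains $c,w_0,\dots,w_n$. I will first show that these vertices occur along $Q$, read from $c$ towards $b$, in the order $c,w_0,w_1,\dots,w_n$. Orientation matters here: $\Sigma_{n+1}$ consists of segments $w_nQw_{n+1}$, and these must be segments of $cQb$ read from $c$.

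\textbf{Step 1: the order of the cut-vertices.} Since $w_{n+1}$ is a $(\mathcal{P}_{n+1},\mathcal{Q}_{n+1})$-terminal, for every $Q\in\mathcal{Q}_{n+1}$ we have $w_{n+1}Qc\cap\bigcup\mathcal{P}_{n+1}=\{w_{n+1}\}$. Here $w_{n+1}Qc$ is the segment of $Q$ between $w_{n+1}$ and $c$. I would like to argue that $w_n$ lies on $cQw_{n+1}$, i.e. strictly between $c$ and $w_{n+1}$, but $w_n\notin\bigcup\mathcal{P}_{n+1}$, so that terminal property of $w_{n+1}$ alone says nothing about where $w_n$ sits.

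Instead I use the terminal property of $w_n$ with respect to $\mathcal{P}_n\supseteq\mathcal{P}_{n+1}$: $w_nQc\cap\bigcup\mathcal{P}_n=\{w_n\}$. Since $w_{n+1}\in\bigcup\mathcal{P}_{n+1}\subseteq\bigcup\mathcal{P}_n$ and $w_{n+1}\neq w_n$ (because $w_n\notin\bigcup\mathcal{P}_{n+1}$), the vertex $w_{n+1}$ does not lie on $w_nQc$. Hence $w_{n+1}$ lies strictly beyond $w_n$ as seen from $c$. Induction then gives the order $c,w_0,\dots,w_n$ on every $Q\in\mathcal{Q}_n$. Consequently each element of $\Sigma_i$ is the segment of a path of $\mathcal{P}_{b,c}$ between consecutive points $w_{i-1},w_i$. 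Each element of $\Sigma_n$ is also a segment of some $Q\in\mathcal{Q}_m$ for $m\leq n$, and on such $Q$ the segments for different indices are internally disjoint.

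\textbf{Step 2: the first assertion.} Take $x\in\sigma\cap\sigma'$ with $\sigma=w_{m-1}Qw_m\in\Sigma_m$ and $\sigma'=w_{n-1}Q'w_n\in\Sigma_n$, where $m<n$ and $Q\in\mathcal{Q}_m$, $Q'\in\mathcal{Q}_n\subseteq\mathcal{Q}_m$. If $Q=Q'$, the two segments lie on one path in disjoint intervals, except that they share the endpoint $w_m$ when $n=m+1$. If $Q\neq Q'$, both paths contain $c,w_0,\dots,w_n$ in this order and are order-compatible. In $Q$ the vertex $x$ lies between $w_{m-1}$ and $w_m$; in $Q'$ it lies between $w_{n-1}$ and $w_n$, hence after $w_m$. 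Then $Q$ visits $x$ before $w_m$ while $Q'$ visits $w_m$ before $x$, contradicting order-compatibility unless $x=w_m$. If $x=w_m$ lies on $\sigma'$, then $w_m=w_{n-1}$ (it is on $Q'$ before $w_{n-1}$ unless equal), and since the $w_i$ are distinct this forces $n=m+1$. Thus the only intersection is $\{w_m\}$, and only when $n=m+1$.

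\textbf{Step 3: the moreover part.} For edge-disjointness, segments of $X$ and $X'$ with different indices share at most one vertex by Step 2, so they share no edge. Segments with the same index $i$ are distinct, hence come from distinct edge-disjoint paths of $\mathcal{P}_{b,c}$; here I need the fact that the $\Sigma_i$ are indexed injectively by the paths, which holds because the paths are edge-disjoint and every segment has at least one edge since $w_{i-1}\neq w_i$. For order-compatibility, take common vertices $x,y$ of $X$ and $X'$. If they lie in segments of the same index, they lie on two order-compatible paths of $\mathcal{P}_{b,c}$ oriented the same way, so their order agrees. If they lie in different indices $i<j$, then by Step 2 both $X$ and $X'$ visit $x$ before $y$, because both traverse $\Sigma_0,\Sigma_1,\dots$ in this order. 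The shared vertices $w_i$ are handled by placing them in either adjacent segment consistently.

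The main obstacle is Step 1: getting the orientation and order of the $w_i$ right, which relies on using the terminal property of $w_n$ rather than that of $w_{n+1}$.
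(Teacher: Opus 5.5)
Your proof is correct and follows the paper's argument. The first assertion comes from comparing the positions of $x$ and $w_m$ on the order-compatible paths $Q\in\mathcal{Q}_m$ and $Q'\in\mathcal{Q}_n$. The moreover part comes from placing each common vertex in same-index segments of $X$ and $X'$ and then treating the same-index and different-index cases separately. Your Step 1 derives the order $c,w_0,\dots,w_n$ on every $Q\in\mathcal{Q}_n$ from the terminal property of $w_n$ together with $w_{n+1}\in\bigcup\mathcal{P}_{n+1}\subseteq\bigcup\mathcal{P}_n$; this makes explicit a fact the paper uses without comment.

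One small slip: in Step 2 only $Q'$ is guaranteed to contain all of $w_0,\dots,w_n$, since $Q\in\mathcal{Q}_m$ need only contain $w_0,\dots,w_m$. Your argument only uses $w_m\in Q$, so nothing breaks.
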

\begin{proof}[Proof of \cref{obs:UniqueInterval}]
    Indeed, if $v\in (\bigcup\Sigma_m)\cap (\bigcup\Sigma_n)$, there are paths $Q\in \mathcal{Q}_m$ and $Q'\in \mathcal{Q}_n$ containing $v$ and such that $v\in (w_{m-1}Qw_{m})\cap (w_{n-1}Q'w_{n})$. 
    In other words, when fixing the orientation as paths starting in $c$, the path $Q$ first traverses $v$ and then $w_{m}$, while $Q'$ first traverses $w_{n-1}$ and then $v$.   
    Since $Q$ and $Q'$ are order-compatible, this is only possible when $v = w_m$ and, thus, $n=m+1$ because $\{w_i\}_{i\in \mathbb{N}}$ defines infinitely many distinct vertices (by item \cref{item:ProgressingTerminal}).

    In order to prove the `Moreover'-part of the statement, fix now two $c{-}w_n$ paths $X$ and $X'$ such that $w_{i-1}Xw_{i},w_{i-1}X'w_{i}\in \Sigma_i$ and $w_{i-1}Xw_i\neq w_{i-1}X'w_i$ for every $i\in [n]$. 
    In particular, since every edge of $X$ has both endpoints in $\Sigma_i$ for some $i\in [n]$, the edge-disjointness between $X$ and $X'$ follows from the fact that $w_{i-1}Xw_i$ and $w_{i-1}X'w_i$ are subpaths of two distinct paths from the edge-disjoint family $\mathcal{Q}_{i}$, by definition of $\Sigma_i$.

    Now, aiming to prove that $X$ and $X'$ are order-compatible, fix two distinct vertices $x,x'\in X\cap X'$.
    Then, choose indices $i,i'\in [n]$ such that $x\in \bigcup\Sigma_i$, $x'\in \bigcup \Sigma_{i'}$ and $|i'-i|$ is minimized with these properties. 
    In particular, the previous paragraphs just concluded that $x\in w_{i-1}Xw_i\cap w_{i-1}X'w_i$ and $x'\in w_{i'-1}Xw_{i'}\cap w_{i'-1}X'w_{i'}$. 
    If $i=i'$, then the segments $w_{i-1}Xw_{i}$ and $w_{i-1}X'w_i$ are distinct elements of $\Sigma_i$ by assumption, and hence there are two different order-compatible paths $Q,Q' \in \mathcal{Q}_{i}$ that contain $w_{i-1}Xw_{i}$ and $w_{i-1}X'w_i$, respectively. 
    As such, $X$ and $X'$ both traverse the vertices $x$ and $x'$ in the same order.

    If $i\neq i'$, and thus assuming $i<i'$ without loss of generality, it follows from the minimality of $|i'-i|>0$ that $x\neq w_{i'-1}$: after all, $w_{i'-1}\in \Sigma_{i'}$ and we could have chosen $|i-i'|$ to be zero if $x=w_{i'-1}$. 
    Similarly, $x'\notin \Sigma_{i'-1}$ by the same minimality argument and since $0\leq |i'-1-i|<|i'-i|$.
    Therefore, when orienting both $X$ and $X'$ as paths starting at $c$, it follows that they traverse first $x$, then $w_{i'-1}$ and $x'$ after that. 
    In other words, $X$ and $X'$ are order compatible paths.
    \end{proof}

Now put $\Sigma:=\bigcup_{n\in \mathbb{N}}\Sigma_n$.
Due to items \cref{item:ChoiceTerminal} and \cref{item:ProgressingTerminal} in the definition of the pairs $\{(\mathcal{P}_n,\mathcal{Q}_n)\}_{n\in\mathbb{N}}$, the following claim is verified:

\begin{claim}\label{claim:choiceUj}
    For every $k\in \mathbb{N}$, fix a path $P_k\in \mathcal{P}_k$ containing $w_k$ (which exists since this is a $(\mathcal{P}_k,\mathcal{Q}_k){-}$terminal).
    Then, there is $n_k\geq k$ such that, for some $u_k\in \bigcup \Sigma_{n_k}$, we have $u_k\in P_k$ and $(aP_{k}u_k)\cap \bigcup \Sigma = \{u_k\}$. 
\end{claim}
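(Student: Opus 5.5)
The natural plan is to walk along $P_k$ from $a$ and stop at the first vertex that lies in $\bigcup\Sigma$. First we check that this walk does hit $\bigcup\Sigma$. Since $w_k$ is a $(\mathcal{P}_k,\mathcal{Q}_k)$-terminal, it lies on every $Q\in\mathcal{Q}_k$, and so it lies on every path of $\Sigma_k$ (as the endpoint $w_k$ of $w_{k-1}Qw_k$), or of $\Sigma_0$ when $k=0$. Hence $w_k\in P_k\cap\bigcup\Sigma$, and the set $P_k\cap\bigcup\Sigma$ is nonempty.

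Orient $P_k$ from $a$ and let $u_k$ be its first vertex in $\bigcup\Sigma$. Then $aP_ku_k$ meets $\bigcup\Sigma$ only in $u_k$, by minimality. This gives the second required property, and $u_k\in P_k$ is automatic.

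It remains to pick an index $n_k\geq k$ with $u_k\in\bigcup\Sigma_{n_k}$. By the choice of $u_k$ there is some $n$ with $u_k\in\bigcup\Sigma_n$, and the only thing to rule out is that every such $n$ is $<k$. This is the main step, and it is where \cref{item:ChoiceTerminal} and \cref{item:ProgressingTerminal} enter.

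For $n<k$, any vertex of $\bigcup\Sigma_n$ lies on $w_{n-1}Qw_n$ for some $Q\in\mathcal{Q}_n$. Every vertex on this segment other than $w_n$ lies on $cQw_n$ minus $w_n$. Since $w_n$ is a $(\mathcal{P}_n,\mathcal{Q}_n)$-terminal, we have $(w_nQc)\cap\bigcup\mathcal{P}_n=\{w_n\}$, so such a vertex avoids $\bigcup\mathcal{P}_n$. By \cref{item:ChoiceTerminal}, $\mathcal{P}_k\subseteq\mathcal{P}_n$, so it also avoids $P_k$.

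The only remaining candidate is $u_k=w_n$ with $n<k$. But \cref{item:ProgressingTerminal} gives $w_n\notin\bigcup\mathcal{P}_{n+1}\supseteq\bigcup\mathcal{P}_k$, contradicting $u_k\in P_k$.

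So $P_k$ meets no vertex of $\bigcup_{n<k}\bigcup\Sigma_n$. Therefore the index $n_k$ for which $u_k\in\bigcup\Sigma_{n_k}$ satisfies $n_k\geq k$. The case $k=0$ is trivial, since then every $n\ge0$ works.
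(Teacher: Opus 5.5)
Your proposal is correct and follows the paper's argument: take $u_k$ as the first vertex of $P_k$ in $\bigcup\Sigma$, then rule out $n_k<k$, since the terminal property of $w_n$ together with $P_k\in\mathcal{P}_k\subseteq\mathcal{P}_n$ forces $u_k=w_n$, contradicting $w_n\notin\bigcup\mathcal{P}_{n+1}\supseteq\bigcup\mathcal{P}_k$. Like the paper, you use without comment that $w_{n-1}Qw_n\subseteq cQw_n$ (i.e.\ $w_{n-1}$ comes before $w_n$ on $Q$, starting from $c$). This holds because $w_n\neq w_{n-1}$ and $w_n\in\bigcup\mathcal{P}_{n-1}$, so $w_n$ cannot lie on $w_{n-1}Qc$.
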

\begin{proof}[Proof of \cref{claim:choiceUj}]
    Once $w_k\in P_k\cap \bigcup \Sigma$, there is trivially a vertex $u_k$ from $\bigcup \Sigma$ which is closest to $a$ in $P_k$, thus satisfying $(aP_ku_k)\cap \bigcup \Sigma = \{u_k\}$.
    In this case, let $n_k\in \mathbb{N}$ be the corresponding index such that $u_k\in \Sigma_{n_k}$ and fix a path $Q\in \mathcal{Q}_{n_k}$ containing $u_k$.
    Then, suppose $n_k<k$ for a contradiction.
    Since $u_k\in P_k\cap Q$ and $w_{n_k}$ is a $(\mathcal{P}_{n_k},\mathcal{Q}_{n_k}){-}$terminal, we should have $u_k = w_{n_k}$ because $P_k\in \mathcal{P}_k\subseteq \mathcal{P}_{n_k}$.
    On the other hand, also due to the inequality $n_k<k$, item \cref{item:ProgressingTerminal} from the construction of the cut-vertices would imply that $w_{n_k}\notin P_k$ and, thus, contradict the choice of $u_k$.
    Therefore, we must have $n_k\geq k$.
\end{proof}

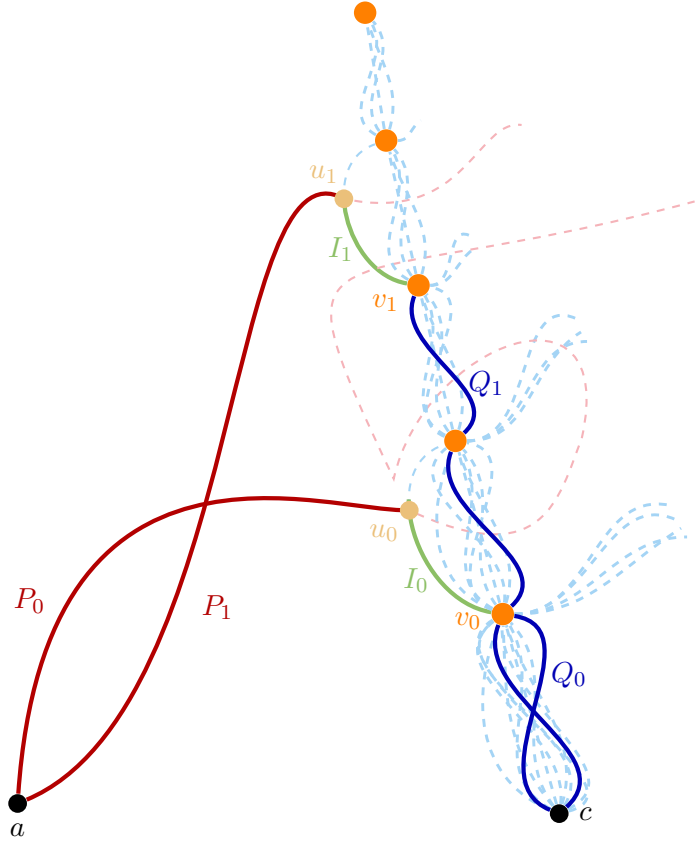
\begin{figure}[h]
    \centering
      \begin{tikzpicture}[yscale=0.9,
    vertex/.style={circle, fill=black, inner sep=2.5pt},
    setW/.style={circle, fill=orange, inner sep=3pt},
    setU/.style={circle, fill=lucasOrange, inner sep=2.5pt},
    bluepath/.style={lucasBlue, line width=1.1pt},
    redpath/.style={red!70!black, line width=1.1pt}
]

\node[vertex, fill=black,label=below:$a$] (a) at (-8,2.6) {};

\node[vertex, fill=black, label=right:$c$]  (c)  at (-0.8,2.45) {};
\node[setW] (v0) at (-1.55,5.4) {};
\node[setW] (v1) at (-2.18,7.96) {};
\node[setW] (v2) at (-2.67,10.26) {};
\node[setW] (v3) at (-3.1,12.4) {};
\node[setW] (v4) at (-3.38,14.29) {};


\draw[bluepath, thick, dashed] (v2) to[out=170, in=-160] coordinate[pos=0.6] (u1) (v3);
\draw[lucasDarkGreen, ultra thick, dash pattern=on 1.6cm off 100cm, dash phase=0cm] (v2) to[out=170, in=-160] (v3);
\draw[bluepath, thick, dashed] (v0) to[out=170, in=-160] coordinate[pos=0.6] (u2) (v1);
\draw[lucasDarkGreen, ultra thick, dash pattern=on 2cm off 100cm, dash phase=0cm] (v0) to[out=170, in=-160] (v1);


\draw[bluepath,dashed] (v0) to[out=0, in=-180] (0.9,6.54);
\draw[bluepath,dashed] (v0) to[out=0, in=-145] (0.82,6.67);
\draw[bluepath,dashed] (v0) to[out=0, in=154] (0.74,6.8);
\draw[bluepath,dashed] (v0) to[out=0, in=168] (0.66,7.01);

\draw[bluepath,dashed] (v1) to[out=0, in=-180] (-0.43,9.43);
\draw[bluepath,dashed] (v1) to[out=0, in=-145] (-0.51,9.57);
\draw[bluepath,dashed] (v1) to[out=0, in=154] (-0.58,9.77);

\draw[bluepath,dashed] (v2) to[out=0, in=-145] (-1.92,10.81);
\draw[bluepath,dashed] (v2) to[out=0, in=154] (-2,11);

\draw[bluepath,dashed] (v3) to[out=0, in=-145] (-2.64,12.7);

\draw[lucasLightRed, thick, dashed]
(a) .. controls (-5,4) and (-5.23,12.51) .. (u1) .. controls (-2.03,11.15) and (-1.9,12.72) .. (-1.3,12.63);
\node[left] at (-5,5.5) {\textcolor{red!70!black}{$P_1$}};
\draw[lucasLightRed, thick, dashed]
(a) .. controls (-7.63,8.5) and (-3.91,6.9) .. (u2) .. controls (-1,6) and (-0.81,7.1) .. (-0.5,8)
    .. controls (0,10.5) and (-2.6,9.4)
  .. (-3,7.4)
    .. controls (-4.7,11.4) and (-4,10)
  .. (1,11.5);
\node[left] at (-7.5,5.6) {\textcolor{red!70!black}{$P_0$}};
\draw[redpath, ultra thick]
(a) .. controls (-5,4) and (-5.23,12.51) .. (u1);
\draw[redpath, ultra thick]
(a) .. controls (-7.63,8.5) and (-3.91,6.9) .. (u2);

\foreach \A/\B in {c/v0, v0/v1, v1/v2, v2/v3, v3/v4}
{
  \draw[bluepath,dashed] (\A) to[out=149, in=-61] (\B);
    \draw[bluepath,dashed] (\A) to[out =75, in=-74] (\B);
    \draw[bluepath,dashed] (\A) to[out =124, in=-50] (\B);
  \draw[bluepath,dashed] (\A) -- (\B);
}

\draw[bluepath,dashed] (c) to[out=156, in=-110] (v0);
\draw[bluepath,dashed] (c) to[out=127, in=-88] (v0);
\draw[bluepath,dashed] (c) to[out=29, in=-93] (v0);
\draw[bluepath,dashed] (c) to[out=16, in=-147] (v0);
\draw[bluepath,dashed] (c) to[out=60, in=-140] (v0);
\draw[bluepath,dashed] (c) to[out=83, in=-139] (v0);
\draw[blue!70!black, ultra thick] (c) to[out=45, in=-110] (v0);
\draw[blue!70!black, ultra thick] (c) to[out=156, in=-10] (v0);

\draw[bluepath,dashed] (v0) to[out=83, in=-139] (v1);
\draw[bluepath,dashed] (v0) to[out=156, in=-110] (v1);
\draw[bluepath,dashed] (v0) to[out=156, in=-10] (v1);
\draw[blue!70!black, ultra thick] (v0) to[out=45, in=-110] (v1);

\draw[bluepath,dashed] (v1) to[out=156, in=-10] (v2);
\draw[blue!70!black, ultra thick] (v1) to[out=45, in=-110] (v2);

\node[setU] at (u1) {};
\node[setU] at (u2) {};
\node[left] at (-2.8,6.6) {\textcolor{lucasOrange}{$u_0$}};
\node[left] at (-3.6,11.9) {\textcolor{lucasOrange}{$u_1$}};
\node[left] at (-2.4,5.9) {\textcolor{lucasDarkGreen}{$I_0$}};
\node[left] at (-3.4,10.8) {\textcolor{lucasDarkGreen}{$I_1$}};
\node[left] at (-0.3,4.5) {\textcolor{blue!70!black}{$Q_0$}};
\node[left] at (-1.4,8.8) {\textcolor{blue!70!black}{$Q_1$}};
\node[left] at (-1.7,5.3) {\textcolor{orange}{$v_0$}};
\node[left] at (-2.8,10) {\textcolor{orange}{$v_1$}};

\end{tikzpicture}
    \caption{Construction of the paths $X_0,X_1,\ldots$, where the orange points correspond to the sequence of cut-vertices $\{w_n\}_{n\in\mathbb{N}}$.}
    \label{fig:omegatransitive}
\end{figure}

Now, fix the paths $\{P_k\}_{k\in\mathbb{N}}$ and vertices $\{u_k\}_{k\in\mathbb{N}}$ as in \cref{claim:choiceUj}.
After passing to a suitable subsequence, we may additionally assume that $n_0\geq 1$ and $n_{k+1}\geq n_{k}+2$ for every $k\in\mathbb{N}$.
By \cref{obs:UniqueInterval}, this means that $(\bigcup\Sigma_{n_k})\cap (\bigcup\Sigma_{n_j}) = \emptyset$ for two distinct $j,k\in \mathbb{N}$. 
Then, for each $k\in\mathbb{N}$, we fix the cut-vertex $v_k:=w_{n_k-1}$ of $\Sigma_{n_k}$ as well as a path $Q_k'\in \mathcal{Q}_{n_k}$ such that $I_k:=u_kQ_k'v_k$ is a segment of some path in $\Sigma_{n_k}$.
In the case where $u_k=v_k=w_{n_k-1}$, the segment $I_k$ consists of the single vertex $u_k$, regardless of the choice of $Q_k'$ (see Figure~\ref{fig:omegatransitive}).

By induction on $k$, we also fix $Q_k\in \mathcal{Q}_{n_k-1}\setminus \{Q_j,Q_j': j < k\}$. 
Note that $v_k=w_{n_k-1} \in Q_k$, and so $X_k:=aP_ku_kI_kv_kQ_kc$ is an $a{-}c$ path.

It remains to prove that $\mathcal{P}_{a,c}:=\{X_k\}_{k\in\mathbb{N}}$ defines a system of edge-disjoint and pairwise order-compatible $a-c$ paths.
Aiming to rely on \cref{lem:Concatenation} for that purpose, we consider the following subpaths given some $k \in \mathbb{N}$:

\begin{itemize}
    \item $X_{k}^-:=aX_{k}u_{k} = aP_ku_k$ and 
    \item $X_{k}^+:=u_{k}X_{k}c = u_k I_k v_{k}Q_{k}c$.
\end{itemize}

Given $j<k$, it follows that $X_k^-$ and $X_j^+$ are vertex-disjoint: 
Indeed, $X_j^+ \subseteq \bigcup_{i=0}^{n_j} \Sigma_i$ by definition and $X_k^- \cap \bigcup \Sigma = \{u_k\}$ by the definition of $u_k$ as in \cref{claim:choiceUj}, while $u_k \in \bigcup \Sigma_{n_k}$ and $\Sigma_{n_k} \cap \Sigma_{n_i}=\emptyset$ for every $0 \leq i \leq n_j$.
On the other hand, the paths $X_k^+$ and $X_j^-$ intersect in at most $u_j$ since $(aP_ju_j) \cap \Sigma = \{ u_j \}$ and $X_k^+ \subseteq \Sigma$ by definition.

Hence, it remains to prove that the pairs $X_j^-$ and $X_k^-$ as well as $X_k^+$ and $X_j^+$ are pairwise edge-disjoint and order-compatible, respectively.
For the pair $X_j^-$ and $X_k^-$ this follows by the fact that both paths define subpaths of distinct paths from $\cP_{a,b}$.
\begin{claim}
    $X_k^+$ and $X_j^+$ are edge-disjoint and order-compatible.
\end{claim}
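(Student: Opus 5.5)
The plan is to reduce the claim to the `Moreover' part of \cref{obs:UniqueInterval}. Orient both paths from $c$ and fix $j<k$, so that $n_j\leq n_k-2$ by our choice of subsequence. I would split $X_k^+$ at the cut-vertex $w_{n_j}$, keeping the piece near $c$ and a remainder. The piece near $c$ I would compare with a suitable extension of $X_j^+$ using \cref{obs:UniqueInterval}. For the remainder, I would show that it meets $X_j^+$ in at most the single vertex $w_{n_j}$.

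\textbf{Step 1 (decomposing $X_k^+$).} Since $Q_k\in\mathcal{Q}_{n_k-1}\subseteq\mathcal{Q}_i$ for every $i\leq n_k-1$, by \cref{item:ChoiceTerminal} the path $Q_k$ contains $w_0,w_1,\dots,w_{n_k-1}$ in this order from $c$. Hence $cQ_kv_k$ is a concatenation of the segments $w_{i-1}Q_kw_i\in\Sigma_i$ for $0\leq i\leq n_k-1$. As $n_j<n_k-1$, the vertex $w_{n_j}$ lies on $cQ_kv_k$. I set
\[X:=cQ_kw_{n_j}\qquad\text{and}\qquad R:=w_{n_j}X_k^+u_k,\]
so that $X_k^+$ is $X$ followed by $R$ (read from $c$). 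Every vertex of $R$ lies in $\bigcup_{i>n_j}\bigcup\Sigma_i$: the $Q_k$-segments of $R$ lie in $\Sigma_{n_j+1},\dots,\Sigma_{n_k-1}$, and $I_k$ lies in $\Sigma_{n_k}$.

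\textbf{Step 2 (extending $X_j^+$).} Similarly, $cQ_jv_j$ is made of segments in $\Sigma_0,\dots,\Sigma_{n_j-1}$. Also $I_j=u_jQ_j'v_j$ is part of the segment $w_{n_j-1}Q_j'w_{n_j}\in\Sigma_{n_j}$, so $X_j^+$ is a subpath of
\[X':=cQ_jw_{n_j-1}Q_j'w_{n_j},\]
which starts at $c$. For $i<n_j$, the $i$-th segments of $X$ and $X'$ come from $Q_k$ and $Q_j$ respectively, and $Q_k\neq Q_j$. For $i=n_j$ they come from $Q_k$ and $Q_j'$, and $Q_k\neq Q_j'$. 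Both inequalities hold by the choice $Q_k\notin\{Q_\ell,Q_\ell'\colon\ell<k\}$. Since $\mathcal{Q}_i$ consists of edge-disjoint paths, these segments are distinct elements of $\Sigma_i$. The `Moreover' part of \cref{obs:UniqueInterval} then shows that $X$ and $X'$ are edge-disjoint and order-compatible. The same then holds for $X$ and the subpath $X_j^+$ of $X'$, because both subpaths start at $c$ and common vertices keep their relative order.

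\textbf{Step 3 (the remainder).} This is the step I expect to need the most care. By the first part of \cref{obs:UniqueInterval}, $\bigcup\Sigma_i$ and $\bigcup\Sigma_{i'}$ with $i\leq n_j<i'$ meet only when $i=n_j$ and $i'=n_j+1$, and then only in $w_{n_j}$. Since $X_j^+\subseteq\bigcup_{i\leq n_j}\bigcup\Sigma_i$, this gives $R\cap X_j^+\subseteq\{w_{n_j}\}$. Consequently $R$ and $X_j^+$ share no edge, and together with Step 2 this proves edge-disjointness. For order-compatibility, every common vertex of $X_k^+$ and $X_j^+$ lies on $X$, except possibly $w_{n_j}$, which is the last vertex of $X$ anyway. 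So the common vertices all lie on $X$, and their order agrees by Step 2. The only delicate point is the case $u_j=w_{n_j}$. There $w_{n_j}$ is the final vertex of $X_j^+$ and the final vertex of $X$, hence it is traversed last by both, so the order is still consistent.
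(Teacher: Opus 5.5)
Your proposal is correct and follows essentially the paper's route. The paper likewise notes $X_j^+ \subseteq cQ_jv_jQ_j'w_{n_j} \subseteq \bigcup_{i\leq n_j}\Sigma_i$ and $X_k^+\cap\bigcup_{i\leq n_j}\Sigma_i = cQ_kw_{n_j}$, which is the content of your Steps 1 and 3 via the first part of \cref{obs:UniqueInterval}, and then applies the \emph{Moreover} part of \cref{obs:UniqueInterval} to $cQ_jv_jQ_j'w_{n_j}$ and $cQ_kw_{n_j}$ exactly as in your Step 2; you simply spell out the remainder $R$ and the segment-wise distinctness ($Q_k\neq Q_j$ below $n_j$, $Q_k\neq Q_j'$ at $n_j$) more explicitly.
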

\begin{proof}
    It holds that $X_j^+ \subseteq cQ_jv_jQ'_j w_{n_j}\subseteq \bigcup_{i=0}^{n_j}\Sigma_i$ and $X_k^+\cap \bigcup_{i=0}^{n_j}\Sigma_i = cQ_kw_{n_j}$ by construction and \cref{obs:UniqueInterval}.
    Hence, it suffices to conclude that the segments $cQ_jv_jQ_j'w_{n_j}$ and $cQ_kw_{n_j}$ are edge-disjoint and order-compatible.
    However, this follows from the 'Moreover'-part of \cref{obs:UniqueInterval} and the choice of the distinct edge-disjoint paths $Q_j,Q_j'$ and $Q_k$.
    After all, $w_{i-1}Q_jw_i\neq w_{i-1}Q_j'w_i$, $w_{i-1}Q_jw_i\neq w_{i-1}Q_kw_i$ and $w_{i-1}Q_j'w_i\neq w_{i-1}Q_kw_i$ for every $i\in [n_j]$, as well as $Q_j,Q_j',Q_k\in \mathcal{Q}_{n_j-1}$ because $\mathcal{Q}_{n_k-1}\subseteq \mathcal{Q}_{n_j}\subseteq \mathcal{Q}_{n_j-1}$.
\end{proof}

\section{Order compatibility is transitive - the uncountable case}
\label{sec_ordercomp_uncble}

We are now ready to complete the proof of \cref{thm_main2} announced in the introduction, which we repeat here for convenience.

\Transitive*

\begin{proof}
    If $\delta$ is infinite of uncountable cofinality, this follows from the fact that $ a \sim_\delta b$ if and only if $ a \approx_\delta b$ (\cref{cor_main}), and the fact that $\sim_\delta$ is transitive by Menger's theorem. The same argument applies if  $\delta$ is finite.
    Next, if $\delta = \omega$ this follows from the results in the previous \cref{sec_ordercomp_cble}.

    Thus, it remains to consider the case where $\delta$ is a singular cardinal of countable cofinality.
    Let $(\delta_{i})_{i \in \mathbb{N}}$ be an increasing sequence of regular cardinals converging towards $\delta$ with $\omega<\delta_{0}$.

    Then consider pairwise distinct vertices $a,b,c \in V(G)$ such that there exists an $a{-}b$ path system $\mathcal{P}_{a,b}$ as well as a $b{-}c$ path system $\mathcal{P}_{b,c}$ of $\delta$ many pairwise order-compatible $a{-}b$ paths and $b{-}c$ paths, respectively.
    For each $x \in \{ (a,b), (b,c) \}$ we partition $\mathcal{P}_{x}=\dot\bigcup_{i \in \N} \mathcal{P}^i_{x}$ into a disjoint union of subsets $\mathcal{P}^i_{x}$ ($i \in \N$) of size $|\mathcal{P}^i_{x}|=\delta_i$. 
    Moreover, since all $\delta_i$ are regular uncountable, we can assume that individually every $\mathcal{P}^i_{x}$ consists of paths of bounded length $\leq p_i \in \N$, as in the proof of \cref{cor_main}.

    By \cref{lem:regularCardCase}, for each $i \in \N$ there exists a sequence $P^i_x: a=t^i_0 t^i_1 \dots t^i_{k_i}=b$ of vertices from paths in $\mathcal{P}^i_x$ with $\kappa_V(t_m, t_{m+1})\geq \delta_i$ for every $m \in [k_i-1]$, together with a subfamily $\mathcal{Q}^{i}_x \subseteq \mathcal{P}^{i}_x$ of size $\delta_i$, such that every path in $\mathcal{Q}^{i}_x$ traverses every vertex on $P^i_x$ and in exactly the order induced by $P^i_x$.
    As a consequence, any two finite sequences $P^i_x$ and $P^j_x$ are order-compatible for any $i<j \in \mathbb{N}$: Picking some path $P_1 \in \mathcal{Q}^i_x$ and $P_2 \in \mathcal{Q}^j_x$, then $P_1,P_2 \in \cP_x$ are order-compatible by assumption, and both traverse every vertex on $P^i_x$, $P^j_x$ in the order given by $P^i_x$, $P^j_x$, respectively.

    We now view each finite sequence $P^i_x$ as a path, that is, as a simple graph with edges $E(P^i_{x}) = \{t^i_m t^i_{m+1} \colon m \in [k_i-1]\}$. We then consider an auxiliary multigraph $H$ with $V(H) = V(G)$ and with edge set $E(H)=\dot{\bigcup}_{i \in \mathbb{N}}(E(P^i_{a,b}) \, \dot\cup \, E(P^i_{b,c}))$.
    Then clearly, there exist $\omega$ many pairwise edge-disjoint and order-compatible paths from $a$ to $b$, and also from $b$ to $c$ in $H$. 
    Hence, from the results in the previous \cref{sec_ordercomp_cble} we conclude that there exist $\omega$ many pairwise edge-disjoint and order-compatible $a{-}c$ paths $(Q_i)_{i \in \mathbb{N}}$ in $H$.

    The proof will finish by an application of \cref{lem:auxilaryGraph}. 
    For this, it suffices to prove that for every $j \in \mathbb{N}$ there exists some $k \in \mathbb{N}$ such that $\kappa^G_V(x,y) \geq \delta_j$, for every edge $xy \in E(Q_k)$.
    Given some $j \in \mathbb{N}$, the set $\dot{\bigcup}_{i<j}\big(E(P^i_{a,b}) \, \dot{\cup} \,  E(P^i_{b,c}) \big)$ has finite size.
    Thus, there exists a path $Q_k$ edge-disjoint from $\dot{\bigcup}_{i<j}\big(E(P^i_{a,b}) \, \dot{\cup} \,  E(P^i_{b,c})\big)$ and hence only using edges from paths $P^i_{a,b}$ or $P^i_{b,c}$, where $i \geq j$.
    These paths $P^i_{a,b}$ or $P^i_{b,c}$ with $i \geq j$ witness that $\kappa^G_V(x,y)\geq \delta_j$ for every $xy \in E(Q_k)$.
\end{proof}

\bibliographystyle{amsplain}
\bibliography{collective.bib}

\end{document}